\theoremstyle{plain}
\newtheorem{Theorem}{Thm}[section]
\newtheorem{Thm}[Theorem]{Theorem}
\newtheorem{thm}[Theorem]{Theorem}
\newtheorem{lem}[Theorem]{Lemma}
\newtheorem{Cor}[Theorem]{Corollary}
\newtheorem{Prop}[Theorem]{Proposition}
\newtheorem{prop}[Theorem]{Proposition}
\newtheorem*{Question*}{Question}
\newtheorem*{Thm*}{Theorem}
\newtheorem*{thm*}{Theorem}
\newtheorem*{Cor*}{Corollary}
\theoremstyle{definition}
\newtheorem{Def}[Theorem]{Definition}
\newtheorem{Rem}[Theorem]{Remark}
\newtheorem{Exm}[Theorem]{Example}
\renewenvironment{proof}[1][\proofname]{\par
\pushQED{\qed}%
\normalfont \topsep6\p@\@plus6\p@\relax
\trivlist
\item[\hskip\labelsep
\scshape
#1\@addpunct{.}]\ignorespaces
}{%
\popQED\endtrivlist\@endpefalse
}
\newcommand\mbb{\mathbb}
\newcommand\mcal{\mathcal}
\newcommand\A{\mbb{A}}
\newcommand\C{\mbb{C}}
\newcommand\N{\mbb{N}}
\renewcommand\P{\mbb{P}}
\newcommand\R{\mbb{R}}
\newcommand\V{\mcal{V}}
\newcommand\ul{\underline}
\DeclareMathOperator\lspan{span}
\DeclareMathOperator\tint{int}
\DeclareMathOperator\conv{conv}
\DeclareMathOperator\rk{rank}
\DeclareMathOperator\tr{tr}
\DeclareMathOperator\diag{diag}
\DeclareMathOperator\gr{Gr}
\newcommand\binomial{\binom}
\newcommand\ideal[1]{\langle #1 \rangle}
\newcommand\scp[1]{\langle #1 \rangle}
\newcommand\sym[1][n]{\mcal{S}^{#1}}
\newcommand\psd[1][n]{\mcal{S}^{#1}_{+}}
\newcommand\pd[1][n]{\mcal{S}^{#1}_{++}}
\newcommand\mlt{maximum likelihood threshold}
\newcommand\MLE{maximum likelihood estimator}
\newcommand\Grank{Generic completion rank}
\newcommand\grank{generic completion rank}
\DeclareMathOperator\ml{mlt}
\DeclareMathOperator\gcr{gcr}
\begin{document}
\title{Maximum Likelihood Threshold and Generic Completion Rank of Graphs}
\author{Grigoriy Blekherman}
\author{Rainer Sinn}

\begin{abstract}
The minimum number of observations such that the maximum likelihood estimator in a Gaussian graphical model exists with probability one is called the maximum likelihood threshold of the underlying graph $G$. The natural algebraic relaxation is the generic completion rank introduced by Uhler. We show that the maximum likelihood threshold and the generic completion rank behave in the same way under clique sums, which gives us large families of graphs on which these invariants coincide. On the other hand, we determine both invariants for complete bipartite graphs $K_{m,n}$ and show that for some choices of $m$ and $n$ the two parameters may be quite far apart. In particular, this gives the first examples of graphs on which the maximum likelihood threshold and the generic completion rank do not agree.
\end{abstract}

\maketitle

\section*{Introduction}
Gaussian graphical models are an important class of statistical models widely used in system biology and bioinformatics \cite{MR2064941, krumsiek2011gaussian, schafer2005empirical}. In such applications, one is often given data from few observations compared to the number of random variables. The maximum likelihood estimator is computed to describe the interaction between different variables, e.g. genes. It is well known that the {\MLE} exists with probability one if the number of observations is at least as large as the number of variables. However, with fewer observations the maximum likelihood estimator may fail to exists with high probability. Therefore, it is an important problem to find the minimal number of observations such that the sample covariance matrix has a maximum likelihood estimator with probability one in terms of the graph $G$ underlying the Gaussian graphical model. This number was studied in \cite{BuhlMR1241392}, \cite{UhlerMR3014306} and following \cite{GrossSullivant} we call it the \textit{maximum likelihood threshold} of $G$, denoted $\ml(G)$. See \cite{LaMR1419991} and \cite{WhMR1112133} for an introduction to Gaussian graphical models.

Gaussian graphical models were introduced by Dempster in \cite{Dempster}. In his seminal paper, he showed that existence of the {\MLE} is equivalent to a positive definite matrix completion problem. Using this reformulation, Buhl proved upper and lower bounds on the maximum likelihood threshold in  \cite{BuhlMR1241392}. Uhler in \cite{UhlerMR3014306} introduced an algebraic relaxation of the maximum likelihood threshold which we will call the \textit{generic completion rank} of $G$, which provides an upper bound on the \mlt. Gross and Sullivant made a connection between generic completion rank and combinatorial rigidity theory to prove upper bounds on the generic completion rank, and by extension on the maximum likelihood threshold of $G$. However, the work of Uhler and Gross-Sullivant left open the question of whether the maximal likelihood threshold and the generic completion rank are different for any graph $G$, as they agree in the small examples where both could be computed.

 In this paper, we show that the maximal likelihood threshold and the generic completion rank agree under \textit{clique sums} of graphs. Therefore, large classes of graphs on which these two parameters agree can be built using known cases of equality, see Corollary~\ref{cor:wheels}. However, we also compute both the maximal likelihood threshold and the generic completion rank of complete bipartite graphs $K_{m,n}$. It turns out that these quantities can be quite different in this case. This provides the first examples of graphs where the two parameters do not agree. This phenomenon is also interesting for practical purposes, as it shows that maximum likelihood estimator may exist with probability one with a much smaller number of observations than suggested by the generic completion rank.

We make several reinterpretations of the maximal likelihood threshold and the generic completion rank which we think are interesting in themselves, and we relate generic completion rank of a symmetric matrix, to the non-symmetric case. This shows that the generic completion rank is an important parameter in itself, in need of further study and understanding. We now review previous work and our results in detail.

\vspace{1em}
\noindent \textbf{Main Results.} We write $\sym$ for the vector space of real symmetric $n\times n$ matrices and $\psd$ for the cone of positive semidefinite symmetric matrices. Let $G=(V,E)$ be a simple graph on $n$ vertices labeled by the integers $[n] = \{1,2,\dots,n\}$. Given $G$, we have a projection $\pi_G$ from the real vector space $\sym$ to the real vector space $V = \R^n \oplus \R^E$, where the coordinates in the first summand are indexed by vertices of $G$ and in the second summand by edges of $G$. It is defined as
\[
\pi_G( a_{ij}) = (a_{ii} \colon i\in [n]) \oplus (a_{ij}\colon \{i,j\}\in E).
\]
The image of this projection is a partially specified symmetric matrix and we call it a \emph{$G$-partial matrix}. The existence of a unique maximum likelihood estimator of a $G$-partial matrix $\pi_G(A)$ in a Gaussian graphical model is equivalent to the existence of a positive definite matrix completion of $\pi_G(A)$ by a seminal result of Dempster~\cite{Dempster}.

Therefore the question of computing the maximal likelihood threshold is equivalent to the following matrix completion problem:

\begin{center} What is the minimal number $r$ such that for a generic positive semidefinite matrix $A$ of rank $r$, the projection $\pi_G(A)$ has a positive definite completion? \end{center}

Buhl \cite{BuhlMR1241392} proved bounds on $\ml(G)$ in terms of well-known graph parameters of $G$. Let $\omega(G)$ be the clique number of $G$ and $\tau(G)$ be the tree-width of $G$. Then we have the following:
\[
\omega(G) \leq \ml(G)\leq \tau(G)+1.
\]
However, the clique number and the tree-width of a graph may be far apart and these bounds are often far from sharp. In \cite{UhlerMR3014306}, Uhler introduced an algebraic relaxation of maximum likelihood threshold:

The \emph{\grank} of the graph $G$ is the smallest integer $r$ such that $\dim(\pi(V_r)) = \dim(V)$
where $V_r\subset \sym$ denotes the variety of symmetric matrices of rank at most $r$, see Definition~\ref{def:gcr}.
The {\grank} of $G$ is called the rank of $G$ in \cite{GrossSullivant}.

Uhler \cite{UhlerMR3014306} showed that 
\[
\ml(G) \leq \gcr(G),
\]
and, using this inequality, was able to compute the maximal likelihood threshold in some examples. 

Gross and Sullivant \cite{GrossSullivant} made a fruitful connection between generic completion rank of $G$ and combinatorial rigidity theory to prove some very interesting results. For instance, they show that the generic completion rank (and therefore the maximum likelihood threshold) of any planar graph is at most $4$. In all of the examples where both parameters could be computed, there is equality between the maximal likelihood threshold and the generic completion rank. For non-symmetric completion problems considered in Section \ref{sec:non-sym}, Bernstein recently gave a combinatorial classification of bipartite graphs whose {\grank} is two using tropical geometry tools \cite{Bernstein}.

We first show that maximal likelihood threshold and generic completion rank behave in the same way under clique sums. 
Recall that given two simple graphs $G_1 = (V_1,E_1)$ and $G_2=(V_2,E_2)$ such that the induced subgraphs on $V_1\cap V_2$ are cliques in $G_1$ and $G_2$, respectively, their \emph{clique sum} is the graph $(V_1\cup V_2,E_1\cup E_2)$. We denote it by $G_1\boxtimes G_2$. We show the following theorem:
\begin{thm*}[Theorem~\ref{thm:rankcliquesum} and Theorem~\ref{thm:mltcliquesum}]
For any simple graphs $G_1$ and $G_2$ $$\ml(G_1\boxtimes G_2)=\max\{\ml(G_1),\ml(G_2)\} \quad \text{and} \quad \gcr(G_1\boxtimes G_2)=\max\{\gcr(G_1),\gcr(G_2)\}.$$
\end{thm*}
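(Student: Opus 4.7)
The plan is to establish both equalities by a common strategy: lower bounds via monotonicity under induced subgraphs, and upper bounds via a gluing construction along the common clique $C := V_1 \cap V_2$.

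For the lower bounds, both $\gcr$ and $\ml$ are monotone under taking induced subgraphs. Given a partial matrix $P_H$ on an induced subgraph $H\subseteq G$, extend $P_H$ generically to a partial matrix on $G$; the restriction to $V(H)$ of any rank-$\leq r$ symmetric (resp.\ positive definite) completion yields one for $P_H$. Applied to $G_1,G_2\subseteq G_1\boxtimes G_2$, this gives $\gcr(G_1\boxtimes G_2)\geq\max\{\gcr(G_1),\gcr(G_2)\}$ and the same for $\ml$.

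For the upper bound, let $r$ denote the relevant maximum on the right-hand side and start from a generic partial matrix $P$ on $G_1\boxtimes G_2$ (resp.\ a generic PSD rank-$r$ matrix $A$, with $P=\pi_{G_1\boxtimes G_2}(A)$). Restricting $P$ to $G_i$ yields a generic partial matrix, which by hypothesis admits a completion $M_i$ of the appropriate kind. Since $C$ is a clique in both $G_1$ and $G_2$, the entire $C\times C$ block is specified by $P$, so $M_1$ and $M_2$ agree on $C$; denote this common block by $A_C$. In the mlt case both $M_i$ and hence $A_C$ are positive definite, and I glue by the classical Schur-complement formula: writing $M_i=\left(\begin{smallmatrix} D_i & B_i \\ B_i^T & A_C\end{smallmatrix}\right)$ with $C$ last, set
$$
M=\begin{pmatrix} D_1 & B_1 A_C^{-1} B_2^T & B_1 \\ B_2 A_C^{-1} B_1^T & D_2 & B_2 \\ B_1^T & B_2^T & A_C \end{pmatrix}.
$$
A direct computation shows that the Schur complement of $A_C$ in $M$ is block-diagonal with blocks equal to the Schur complements of the $M_i$, hence positive definite, so $M$ is positive definite. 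By construction $M|_{V_i\times V_i}=M_i$, so $M$ matches $A$ on every edge and diagonal entry of $G_1\boxtimes G_2$.

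For the gcr case the same block construction yields a matrix of rank at most $\rk(M_1)+\rk(M_2)-\rk(A_C)\leq 2r-|C|$, which can exceed $r$ when $|C|<r$. My plan here is to switch to the Gram-matrix description over $\C$ (the image dimension of $\pi_G$ restricted to $V_r$ is invariant under extension from $\R$ to $\C$). Write $M_i$ as $\bigl(v^{(i)}_a\cdot v^{(i)}_b\bigr)_{a,b\in V_i}$ for vectors $v^{(i)}_a\in\C^r$, using the standard symmetric bilinear form. The Gram matrices of the two families on $C$ both equal $A_C$, which is generically nondegenerate, since $|C|\leq\omega(G_i)\leq\gcr(G_i)\leq r$ by the clique bound $\omega\leq\gcr$. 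By Witt's extension theorem, the isometry $v^{(1)}_a\mapsto v^{(2)}_a$ on the nondegenerate subspace $\lspan(v^{(1)}_a:a\in C)\subseteq\C^r$ extends to some $Q\in O_r(\C)$. Replacing $v^{(1)}_a$ by $Qv^{(1)}_a$ for all $a\in V_1$ aligns the two families on $C$; the combined Gram matrix is then a rank-$\leq r$ symmetric completion of $P$. The main obstacle is precisely this gcr gluing in the regime $|C|<r$: the block-matrix formula breaks down, and the Witt-theorem route works only after ensuring nondegeneracy of $A_C$, which is where the clique bound enters.
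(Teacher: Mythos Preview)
Your proof is correct and takes a genuinely different route from the paper on both halves.

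For the $\gcr$ clique-sum formula, the paper (Theorem~\ref{thm:rankcliquesum}) works entirely at the level of partial matrices: it fixes rank-$r_i$ completions of the two blocks and then \emph{grows} the common clique block $C$ one row at a time using Lemma~\ref{lem:oneentry}, a Schur-determinant lemma that pins down each new entry so as to preserve the rank, until the two completions share a full-rank $r_1\times r_1$ overlap and can be merged. Your argument replaces this iterative construction with a single global step: represent each $M_i$ as a Gram matrix of vectors in $(\C^r,\langle\,,\,\rangle)$, observe that the two $C$-indexed families span nondegenerate subspaces with identical Gram matrix $A_C$, and invoke Witt extension to align them by an element of $O_r(\C)$. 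This is cleaner and more conceptual; the paper's approach is more explicit and avoids quoting Witt's theorem. Your use of the clique bound $|C|\le\omega(G_i)\le\gcr(G_i)\le r$ to guarantee nondegeneracy of $A_C$ is exactly the right ingredient, and corresponds to the paper's need to grow $C$ up to rank $r_1$.

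For the $\ml$ clique-sum formula, the paper (Theorem~\ref{thm:mltcliquesum}) argues dually: Lemma~\ref{lemma:dualcones} decomposes the extreme rays of $\Sigma_G^\vee$ as the union of those of $\Sigma_{G_1}^\vee$ and $\Sigma_{G_2}^\vee$, so a $G$-partial matrix is interior to $\Sigma_G$ iff its two restrictions are interior. Your primal Schur-complement glue is a direct and self-contained alternative that bypasses the dual-cone machinery entirely; it is the classical positive-definite clique-sum completion. One small point: your monotonicity sentence for the lower bound is phrased in terms of extending a \emph{partial} matrix generically, which is literally what is needed for $\gcr$ but not for $\ml$; for $\ml$ you should instead extend the generic PSD rank-$r$ matrix $A'$ on $V(H)$ to a generic PSD rank-$r$ matrix on $V(G)$ (e.g.\ via $A=X^tX$ with $X$ generic extending a Gram factorization of $A'$). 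This is implicit in what you wrote and is how the paper handles it as well.
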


Thus, we can build large families of graphs on which the two parameters coincide. On the other hand, we compute the generic completion rank and the maximum likelihood threshold of complete bipartite graphs:

\begin{Thm*}
Let $n\geq m\geq 2$ and set $M$ to be the smallest $k$ such that $\binomial{k+1}{2}\geq m+n$. 
\begin{compactenum}
\item The {\grank} of $K_{m,n}$ is $m+1$ if $n>\binom{m}{2}$. It is $m$ if $n\leq \binom{m}{2}$ (Theorem~\ref{thm:rankbipartite}).
\item The {\mlt} of the complete bipartite graph $K_{m,n}$ is the minimum between $M$ and $m+1$ (Theorem \ref{thm:bipartite}).
\end{compactenum}
\end{Thm*}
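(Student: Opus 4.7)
Set up coordinates by parametrizing a rank-$r$ PSD matrix as $LL^T$ with $L = \begin{pmatrix} A \\ B \end{pmatrix}$, $A \in \R^{m \times r}$ and $B \in \R^{n \times r}$; then $\pi_{K_{m,n}}(LL^T) = (\diag(AA^T), \diag(BB^T), AB^T)$ lies in the target space of dimension $m+n+mn$.

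\emph{Part (1): generic completion rank.} The lower bound $\gcr(K_{m,n}) \geq m$ follows from a dimension count: $\pi(V_r)$ projects further onto the bipartite block $AB^T$, which lies in the variety of $m \times n$ matrices of rank at most $r$, of dimension $r(m+n-r)$; and $r(m+n-r) < mn$ for $r < m \leq n$. The refinement $\gcr(K_{m,n}) \geq m+1$ when $n > \binom{m}{2}$ is the source-versus-target comparison $\dim V_m - (m+n+mn) = \binom{m}{2} - n$. For the upper bounds $\gcr \leq m+1$ (always) and $\gcr \leq m$ (when $n \leq \binom{m}{2}$), I would show surjectivity of the differential of $\phi(L) := \pi_{K_{m,n}}(LL^T)$ at a generic $L$. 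By semicontinuity of rank, it suffices to verify surjectivity at a single well-chosen test point $L_0$ where the Jacobian decomposes transparently. In the borderline case $r = m$, the key linear-algebra claim is that the $n$ quadratic forms on $\R^m$ coming from the rows of $B$ are independent inside the $\binom{m}{2}$-dimensional space of traceless symmetric quadratic forms on $\R^m$, which is exactly the arithmetic condition $n \leq \binom{m}{2}$.

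\emph{Part (2): MLT upper bound.} The bound $\ml(K_{m,n}) \leq m+1$ follows from Uhler's inequality $\ml \leq \gcr$ and Part~(1). For $\ml(K_{m,n}) \leq M$, observe that a positive definite completion of $\pi(LL^T)$ is the same as an element $\Delta$ in $\ker \pi_{K_{m,n}}$ (the $(\binom{m}{2}+\binom{n}{2})$-dimensional space of block-diagonal symmetric matrices with zero diagonals) making $LL^T + \Delta \succ 0$. A Schur complement argument shows this is equivalent to the image of the restriction map $\Delta \mapsto \Delta|_{\ker L^T}$ meeting the positive definite cone in $\sym[m+n-M]$. By self-duality of the PSD cone, the obstruction is the existence of a nonzero PSD matrix $\tilde Y \in \sym[m+n]$ with the $K_{m,n}$-sparsity pattern (zero off-diagonal within each part of the bipartition) satisfying $\tilde Y L = 0$. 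I would then argue that for generic rank-$M$ $L$, the hypothesis $\binom{M+1}{2} \geq m+n$ ensures enough linear independence among the rank-one outer products $v_i v_i^T$ of the rows of $L$ inside $\sym[M]$ to rule out any such PSD certificate.

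\emph{Part (3): MLT lower bound.} For the sharp bound $\ml(K_{m,n}) \geq \min(M, m+1)$, I must exhibit, for each $r < \min(M, m+1)$ and a generic rank-$r$ matrix $L$, a nonzero PSD $K_{m,n}$-sparse certificate $\tilde Y$ with $\tilde Y L = 0$. Two constructions are needed, one per factor of the minimum: when $r < M$, exploit the linear dependence among the $m+n$ rank-one matrices $v_iv_i^T$ in the $\binom{r+1}{2}$-dimensional space $\sym[r]$ to produce a certificate; and when $r \leq m$ and $M > m+1$, use the bipartite structure of $K_{m,n}$ directly to build $\tilde Y$ from rank-two edge-supported pieces. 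I expect the main technical obstacle to be the PSD condition itself: the linear space of symmetric $K_{m,n}$-sparse matrices in $\ker(\cdot L)$ can be dimension-counted easily, but carving out a genuinely PSD element — rather than a merely symmetric one — requires a careful construction adapted to the bipartite combinatorics, and it is precisely at the threshold $\binom{M+1}{2} = m+n$ that this PSD structure disappears.
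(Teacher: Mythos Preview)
Your overall architecture is correct and matches the paper's: dimension counts for $\gcr$, and for $\ml$ the dual reformulation via a positive semidefinite certificate $\tilde Y\in L_{K_{m,n}}$ with $\tilde Y L=0$ (this is Proposition~\ref{prop:mltdefs}(2)). Your observation that the relevant linear system is governed by the independence of the rank-one matrices $\ell_i\ell_i^T\in\sym[r]$ built from the rows of $L$ is exactly right; in the paper's coordinates this is the coefficient matrix $C$ of the system $\scp{D_m v_i,D_m v_j}=\scp{D_n w_i,D_n w_j}$ in the squared diagonal entries, whose columns are precisely the Veronese images $\nu_2(\ell_i)$. Two minor corrections: the space you want in Part~(1) has dimension $\binom{m}{2}$ because it is the space of \emph{zero-diagonal} symmetric forms, not traceless ones; and your ``two constructions'' in Part~(3) collapse to one, since whenever $r<\min(M,m+1)$ one has both $r\le m$ and $\binom{r+1}{2}<m+n$, so the same dependence argument applies uniformly. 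For the bound $\gcr\le m+1$ the paper simply invokes the treewidth of $K_{m,n}$ via \cite{GrossSullivant}, which is quicker than checking a Jacobian.

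The one genuine gap is the step you flag yourself: producing a \emph{positive semidefinite} certificate for the lower bound on $\ml$. Your instinct to ``build $\tilde Y$ from rank-two edge-supported pieces'' is a false lead: rank-two matrices supported on a bipartite edge are indefinite, so convex combinations of them cannot be PSD unless they are zero. The paper does not construct $\tilde Y$ explicitly at all. Instead it argues by dominance: the assignment $(D_m,D_n,v_\bullet,w_\bullet)\mapsto\bigl(D_m^{-1}v_\bullet,D_n^{-1}w_\bullet\bigr)$, restricted to real tuples satisfying $\scp{v_i,v_j}=\scp{w_i,w_j}$, is a dominant map to the Grassmannian whenever $\binom{k+1}{2}<m+n$, and generic smoothness then guarantees that its real image has nonempty interior (Lemma~\ref{lem:mltrealsolutions}). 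In your language, this shows that for an open set of $L$ the kernel of the linear system contains a vector with all entries strictly positive, hence real invertible $D_m,D_n$, hence a PSD $\tilde Y$ of rank $n$. You should replace the constructive plan with this kind of genericity argument; everything else in your outline goes through.
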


Note that the {\grank} of $K_{m,n}$ is $m$ or $m+1$, depending on the size of the larger part $n$ compared to $m$, and therefore, this is an upper bound on the {\mlt}. The above minimum is $M$ for $n\leq \binomial{m}{2}$ and $m+1$ for $n>\binomial{m}{2}$. Therefore we have the following Corollary:

\begin{Cor*}[Corollary~\ref{cor:gcrvsmlt}]
\begin{compactenum}[(a)]
\item The {\mlt} of $K_{5,5}$ is $4$, whereas the {\grank} is $5$.
\item Asymptotically, the {\mlt} of $K_{m,m}$ is of the order $\sqrt{m}$. 
\item The {\mlt} of $K_{m,n}$ is equal to the {\grank} of $K_{m,n}$ for $m=2,3,4$ and whenever $n > \binom{m}{2}$. 
\end{compactenum}
\end{Cor*}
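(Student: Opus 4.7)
The plan is to derive each of the three statements directly from the preceding theorem by plugging in the relevant values of $m$ and $n$ and comparing the formula $\min(M,m+1)$ for $\ml(K_{m,n})$ with the formula for $\gcr(K_{m,n})$, which equals $m$ when $n\le \binom{m}{2}$ and $m+1$ when $n>\binom{m}{2}$.

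For part (a), I would take $m=n=5$. Since $n=5\le 10=\binom{5}{2}$, the theorem gives $\gcr(K_{5,5})=5$. For the {\mlt}, I would compute $M$ as the smallest $k$ with $\binom{k+1}{2}\ge m+n=10$: since $\binom{4}{2}=6<10\le 10=\binom{5}{2}$, we get $M=4$, and $\min(M,m+1)=\min(4,6)=4$. For part (b), with $n=m$ the defining inequality $\binom{M+1}{2}\ge 2m$ is equivalent to $M(M+1)\ge 4m$, so $M=\Theta(\sqrt{m})$; since $m+1$ grows linearly, the minimum in the formula is achieved by $M$ for all sufficiently large $m$, giving the claimed order.

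For part (c), I would do a short case analysis. When $n>\binom{m}{2}$, the threshold for $M$ crossing $m+1$ is $\binom{m+1}{2}<m+n$, i.e.\ $n>\binom{m}{2}$, so $M\ge m+1$ and $\min(M,m+1)=m+1=\gcr(K_{m,n})$; this handles the second clause of (c). For $m\in\{2,3,4\}$ and $n<\binom{m}{2}+1$ (so $\gcr=m$), I would verify that $M=m$, i.e.\ that $\binom{m+1}{2}\ge m+n$ holds for every admissible $n$. For $m=2$, $n\le 1$ is excluded by $n\ge m$, so there is nothing to check on the $\gcr=m$ side; for $m=3$, only $n=3$ is in this range and $\binom{4}{2}=6\ge 6$; for $m=4$, the range is $n\in\{4,5,6\}$ and $\binom{5}{2}=10\ge 4+n$ for all three; the complementary range $n>\binom{m}{2}$ falls under the previous case. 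The analogous check fails first at $m=5$, $n=5$, where $\binom{5}{2}=10=m+n$ forces $M=4<5=\gcr$, explaining why the equality breaks at (a).

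The argument is entirely elementary arithmetic given the preceding theorem; the only mild subtlety is tracking the correct sharp thresholds for $M$ and $\gcr$, and recognizing that these two thresholds coincide precisely in the ranges listed in (c).
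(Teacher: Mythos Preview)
Your proposal is correct and follows essentially the same approach as the paper: both derive the corollary directly from Theorem~\ref{thm:bipartite} and Theorem~\ref{thm:rankbipartite} by plugging in values and doing a short case analysis for $m=2,3,4$. The only minor point is that when you verify $M=m$ in part~(c) you check only $\binom{m+1}{2}\ge m+n$ (giving $M\le m$); you should also note that $\binom{m}{2}<m+n$ (giving $M\ge m$), which is automatic for $m\le 4$ since $n\ge m$ forces $m+n\ge 2m>\binom{m}{2}$.
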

The case $m=2$ in (c) was known before, see \cite[Proposition~4.2]{UhlerMR3014306}.

Another notion related to the {\grank}, called the Gaussian rank of a graph, was introduced by Ben-David in \cite{BenDavid}. The case of complete bipartite graphs shows also that the {\mlt} can be far from the Gaussian rank of a graph because the Gaussian rank of $K_{m,n}$ is its treewidth, i.e.~$m+1$, by \cite[Theorem~1.1]{BenDavid}.


In the end of this paper, we relate the {\grank} of bipartite graphs to the {\grank} for the general matrix completion problem: Given a partially specified $m\times n$ matrix, we form a bipartite graph on the row and column indices by adding the edge $\{i,j\}$ if and only if the $(i,j)$th entry of the partial matrix is given. We show that the {\grank} of the graph obtained this way is (up to an additive constant of at most $1$) equal to the {\grank} in the general case, see Proposition~\ref{prop:matrixcompletion}. We take this as an indication that the {\grank} of bipartite graphs is an important graph parameter.

We end this section with an open question of whether the maximum likelihood threshold can be used to provide an upper bound for the \grank.  As explained above for complete bipartite graphs $K_{m,m}$, the {\grank} is a quadratic function of \mlt. But it is not known whether there exist graphs with bounded \mlt and arbitrarily high \grank.

\begin{Question*}
Does there exist a function $f:\N \rightarrow \N$ such that for all $k \in \N$: $$\operatorname{mlt} (G) \leq k \,\,\,\, \text{implies} \,\,\,\, \operatorname{gcr} (G) \leq f(k).$$
\end{Question*}

\textit{Acknowledgements.} We would like to thank Seth Sullivant for helpful comments on an earlier version. The authors were partially supported by NSF grant DMS-1352073.

\section{{\Grank} and \mlt}
We write $\sym$ for the vector space of real symmetric $n\times n$ matrices and $\psd$ for the cone of positive semidefinite symmetric matrices. We denote the interior of $\psd$ by $\pd$; $\pd$ is the cone of positive definite matrices. Let $G=(V,E)$ be a simple graph on $n$ vertices labeled by the integers $[n] = \{1,2,\dots,n\}$. Given $G$, we have a projection $\pi_G$ from the real vector space $\sym$ of real symmetric $n\times n$ matrices to the real vector space $V = \R^n \oplus \R^E$, where the coordinates in the first summand are indexed by vertices of $G$ and in the second summand by edges of $G$. It is defined as
\[
\pi_G( a_{ij}) = (a_{ii} \colon i\in [n]) \oplus (a_{ij}\colon \{i,j\}\in E).
\]
We think of a vector in the image of this projection as a partially specified symmetric matrix and call it a \emph{$G$-partial matrix}. We are interested in the positive semidefinite matrix completion problem, i.e.~we would like to describe the image of the cone of positive semidefinite real symmetric matrices $\psd$ under the coordinate projection $\pi_G$, which is the convex cone of $G$-partial matrices with a positive semidefinite completion. We write $\Sigma_G$ for $\pi_G(\psd)$.

\begin{Def}\label{def:gcr}
The \emph{\grank} of the graph $G$ is the smallest integer $r$ such that the projection $\pi_G$ restricted to the variety of matrices of rank at most $r$ is dominant. In other words,
\[
\dim(\pi(V_r)) = \dim(V)
\]
where $V_r\subset \sym$ denotes the variety of symmetric matrices of rank at most $r$.
\end{Def}
The {\grank} of $G$ is called the rank of $G$ in \cite{GrossSullivant}.
By Chevalley's Theorem \cite[Exercise II.3.19]{HarMR0463157}, the set $\pi(V_r)$ is constructible in the Zariski topology. This implies that almost every $G$-partial matrix will have a completion with complex entries of rank equal to the {\grank} of $G$, which motivates the name.

We will give equivalent definitions from an algebro-geometric point of view. We fix the usual trace inner product on the real vector space of symmetric matrices, i.e.~$\scp{A,B} = \tr(AB)$ for $A,B\in \sym$. The orthogonal complement of the kernel of $\pi_G$ is the coordinate subspace of matrices whose $(i,j)$th entry is $0$ whenever $\{i,j\}$ is not an edge. We denote this linear space by $L_G$.

We interpret the kernel of the projection $\pi_G$ as a linear space of quadratic forms by the one-to-one correspondence $Q_M = (x_1,\dots,x_n) M (x_1,\dots,x_n)^t$; it is the span of the quadratic forms $x_ix_j$ for all non-edges $\{i,j\}$ of the graph. We write $I_G$ for the ideal generated by $\ker(\pi_G)$. This is a square-free monomial ideal and the Stanley-Reisner ideal of the clique complex of the graph. It defines an algebraic set $X_G=\V(I_G)\subset \P^{n-1}$, which is a subspace arrangement. In fact, $X_G$ is the union of all subspaces $\P(\lspan\{e_i\colon i\in K\})$, where $K\subset G$ is a clique and $e_i$ are the standard basis vectors in $\R^n$. We write $R$ for the homogeneous coordinate ring $\R[x_1,\dots,x_n]/I_G$ of $X_G$, which is graded by the total degree grading. We write $R_i$ for the homogeneous part of degree $i$. In this geometric setup, the convex cone $\pi_G(\psd)$ is a subset of $R_2$, namely the cone of sums of squares on $X_G$, hence the notation $\Sigma_G$, see \cite[Section~6.1]{BlekhermanSinnVelasco}.

In this paper, a \emph{linear series} on $X_G$ is a linear subspace of $R_1$. Using these notations, we give two equivalent definitions of the \grank.
We write $\scp{S}$ for the ideal generated by a subset $S\subset R$ and $\scp{S}_i$ for the homogeneous part of degree $i$ of a homogeneous ideal.
\begin{Prop}\label{prop:rankdefs}
The integers defined in the following statements are equal to the {\grank} of $G$.
\begin{compactenum}
 \item The smallest $k$ such that there exists a linear series $W\subset R_1$ of dimension $k$ on $X_G$ with the property that $\ideal{W}_2 = R_2$.
 \item The smallest $k$ such that there exists a linear series $W\subset R_1$ of dimension $k$ on $X_G$ which is not contained in the kernel of a matrix in the orthogonal complement of $\ker(\pi_G)$.
\end{compactenum}
\end{Prop}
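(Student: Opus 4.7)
The plan is to interpret $\gcr(G)$ through the differential of $\pi_G$ on the smooth locus of the irreducible variety $V_k$, translate the resulting condition into a statement about subspaces $W\subset\R^n$, and then deduce the equivalence of (1) and (2) by a short duality argument using the trace pairing. The main technical input is identifying the tangent space $T_A V_k$ together with its image under $\pi_G$; once that is in hand, everything else is routine.

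\textbf{Setup and tangent space computation.} Since $I_G$ is generated in degree $2$, we have $R_1=\R^n$ canonically and $\pi_G$ induces an isomorphism $R_2 \xrightarrow{\sim} V$. For a $k$-dimensional subspace $W\subset R_1$, the degree-$2$ part of the ideal, $\langle W\rangle_2 = W\cdot R_1$, corresponds under this isomorphism to the image $\pi_G(T_W)$, where
\[
T_W := \lspan\bigl\{vw^T + wv^T \colon v\in W,\; w\in\R^n\bigr\}\subset\sym.
\]
For a matrix $A\in V_k$ of rank exactly $k$ with $W=\im(A)$, I plan to show $T_A V_k = T_W$. A block decomposition along $\R^n = W \oplus W^\perp$ shows that the right-hand side equals $\{B\in\sym : B_{W^\perp W^\perp}=0\}$, while a parametrization $A=USU^T$ (with $U$ a basis of $W$ and $S$ symmetric invertible) gives the same description of $T_A V_k$ by differentiation; a dimension count ($kn-\binom{k}{2}=\dim V_k$) closes the loop.

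\textbf{From tangent spaces to $\gcr(G)$ and to (2).} Since $V_k$ is irreducible, $\pi_G|_{V_k}$ is dominant iff its differential is surjective at some smooth point, and the smooth points of $V_k$ are exactly the rank-$k$ matrices. Hence $\gcr(G)\le k$ iff there is a rank-$k$ matrix $A$ with $\pi_G(T_A V_k)=V$, which by the previous paragraph is iff there is a $k$-dimensional $W\subset R_1$ with $\langle W\rangle_2=R_2$ (the converse direction is immediate by taking $A=\sum w_iw_i^T$ for any basis of $W$). This proves (1). For the equivalence (1)$\Leftrightarrow$(2), note that linear functionals on $R_2\cong\sym/\ker(\pi_G)$ are represented, via the trace pairing, by elements of $L_G$. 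A direct computation gives that $C\in L_G$ annihilates $\langle W\rangle_2$ iff $\tr(C(vw^T+wv^T))=2\,w^T Cv=0$ for all $v\in W$ and $w\in\R^n$, which in turn is iff $W\subseteq\ker(C)$. Therefore $\langle W\rangle_2=R_2$ iff no nonzero matrix in $L_G$ has $W$ contained in its kernel, which is precisely (2).
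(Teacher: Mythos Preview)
Your proof is correct and follows essentially the same approach as the paper: identify the tangent space $T_A V_k$ with $\langle W\rangle_2$ (modulo $\ker\pi_G$) for $W=\im A$, use generic smoothness to link dominance of $\pi_G|_{V_k}$ to surjectivity of the differential, and then pass from (1) to (2) via the trace duality $R_2^\ast\cong L_G$. The only cosmetic difference is that the paper invokes Terracini's Lemma for the tangent-space identification, whereas you compute it directly via the block decomposition and the parametrization $A=USU^T$; your route is slightly more self-contained, but the two arguments are the same in substance.
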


\begin{proof}
The {\grank} is equal to the integer in (1) by Terracini's Lemma, see Flenner-O'Carroll-Vogel \cite[Proposition~4.3.2]{FlOCaVoMR1724388}. Indeed, the equality $\dim(\pi_G(V_r)) = \dim(V)$ holds if and only if the differential at a generic point on $V_r$ is surjective by Generic Smoothness \cite[Corollary III.10.7 and Proposition III.10.4]{HarMR0463157}. Terracini's Lemma says that the tangent space to $V_r$ at a generic point $M = \ell_1^2 + \dots + \ell_r^2$ is the degree $2$ part of the homogeneous ideal generated by $\ell_1,\dots,\ell_r$. So the differential is surjective if and only if $T_M V_r + \ker(\pi_G) = \sym$, which is in turn equivalent to $\ideal{\ell_1,\dots,\ell_r}_2 = R_2 \cong \sym/\ker(\pi_G)$.

We now prove that the two integers defined in (1) and (2) are equal. By duality in linear algebra, we have $\ideal{W}_2 = R_2$ for a linear series $W\subset R_1$ if and only if there is no linear functional $\ell\in R_2^\ast$ that vanishes on $\ideal{W}_2$. To a linear functional $\ell\in R_2^\ast$, we associate the quadratic form $Q_\ell\colon R_1\to \R$, $Q_\ell(f) = \ell(f^2)$; the representing matrix of $Q_\ell$ with respect to the monomial basis is the moment matrix or middle Catalecticant of $\ell$. Then $\ell$ vanishes on $\ideal{W}_2$ if and only if $W$ is in the kernel of the moment matrix of $\ell$. With the trace inner product, we identify $R_2^\ast$ with the orthogonal complement of $\ker(\pi_G)$, i.e.~$R_2^\ast \cong L_G$. So there exists a linear series $W\subset R_1$ of dimension $k$ such that $\ideal{W}_2 = R_2$ if and only if there is no matrix in the orthogonal complement of $\ker(\pi_G)$ that contains $W$ in its kernel. 
\end{proof}

Using these equivalent characterizations of the \grank, we can recover e.g.~\cite[Theorem~3.2]{GrossSullivant} by a simple dimension count.
\begin{Thm}\label{thm:dimcount}
Let $G$ be a simple graph on $n$ vertices and suppose that the {\grank} of $G$ is $r$. Then $G$ has at most $n(r-1) - \binom{r}{2}$ edges.
\end{Thm}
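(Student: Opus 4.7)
The plan is to read the bound directly off the definition of generic completion rank, using only a dimension comparison. Since $\gcr(G)=r$, by definition $\dim(\pi_G(V_r)) = \dim(V) = n+|E|$. On the other hand, $\pi_G$ is a linear map, so the image of $V_r$ under $\pi_G$ cannot have dimension larger than $V_r$ itself. Therefore everything reduces to computing (or upper-bounding) the dimension of the variety $V_r\subset \sym$ of symmetric $n\times n$ matrices of rank at most $r$.

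The dimension count for $V_r$ is standard: over $\C$, every matrix of rank $\leq r$ in $\sym$ can be written as $AA^t$ for some $n\times r$ matrix $A$, equivalently as a sum of $r$ squares of linear forms $\ell_1^2+\dots+\ell_r^2$. This gives a dominant parametrization $\A^{nr}\to V_r$. The generic fiber is the orbit of a fixed factorization under the right action of the orthogonal group $O(r,\C)$, which has dimension $\binom{r}{2}$. Hence $\dim V_r = nr - \binom{r}{2}$.

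Putting the two observations together,
\[
n + |E| \;=\; \dim(\pi_G(V_r)) \;\leq\; \dim V_r \;=\; nr - \binom{r}{2},
\]
which rearranges to $|E| \leq n(r-1) - \binom{r}{2}$, as claimed.

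There is no real obstacle in this argument; the only mildly nontrivial input is the formula $\dim V_r = nr-\binom{r}{2}$. If one wants to avoid invoking the orthogonal group action one can instead appeal to characterization (1) of Proposition~\ref{prop:rankdefs}: if $W\subset R_1$ is a linear series of dimension $r$ with $\ideal{W}_2 = R_2$, then $R_2$ is spanned by products $w\cdot x_j$ with $w\in W$ and $j\in[n]$. Counting these products with the symmetry $w_i x_j = w_j x_i$ when $w_i=x_i$ (and more generally the $\binom{r}{2}$ syzygies coming from $w_i\cdot w_j - w_j\cdot w_i = 0$) again yields $\dim R_2 = n+|E| \leq nr-\binom{r}{2}$, reproducing the bound without passing through the variety $V_r$.
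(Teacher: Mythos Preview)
Your proof is correct and is essentially the same dimension count as the paper's: both combine $\dim(\pi_G(V_r)) = n + |E|$ with the formula $\dim V_r = rn - \binom{r}{2}$, and the paper's inequality $\dim(V_r) + \dim(\ker(\pi_G)) \geq \dim(\sym)$ is just your inequality $\dim(\pi_G(V_r)) \leq \dim V_r$ rewritten using $\dim(\sym) = \dim(V) + \dim(\ker(\pi_G))$. Your alternative via Proposition~\ref{prop:rankdefs}(1) is a nice bonus and does not appear in the paper.
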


\begin{proof}
The number of edges of $G$ can be expressed in terms of the kernel of $\pi_G$, namely $n + \# E + \dim(\ker(\pi_G)) = \binom{n+1}{2}$. The fact that the {\grank} of $G$ is $r$ implies that $\dim(V_r) + \dim(\ker(\pi_G)) \geq \dim(\sym[n])$. The dimension of the variety of symmetric $n\times n$ matrices of rank at most $r$ is $r n - \binom{r}{2}$, so we get 
\[
\left[rn-\binom{r}{2}\right] + \left[\binom{n}{2} - \#E\right] \geq \binom{n+1}{2}.
\]
The claimed inequality follows by simplification of this one.
\end{proof}

We now turn to the interior of $\Sigma_G$, which is the set of all $G$-partial matrices that have a positive definite completion, see \cite[Proposition~5.5]{CLRMR1327293} or \cite[Lemma~1.5]{CPSV}.

The set of all symmetric matrices $A$ of rank $r$ such that $\pi_G(A)\in \tint(\Sigma_G)$ is a semialgebraic subset of the variety $V_r$ of matrices of rank at most $r$. In the following, we say that \emph{almost all} positive semidefinite matrices of rank $r$ map to $\tint(\Sigma_G)$ if the set of positive semidefinite matrices of rank $r$ mapping to the boundary of $\Sigma_G$ is contained in a proper algebraic subvariety of $V_r$. In other words, a positive semidefinite matrix of rank $r$ will map to the interior of $\Sigma_G$ with probability $1$.
\begin{Def}
The \emph{maximum likelihood threshold} of $G$ is the smallest integer $r$ such that for almost all positive semidefinite matrices $A$ of rank $r$ there exists a positive definite matrix $P$ with 
\[
\pi_G(A) = \pi_G(P).
\]
\end{Def}

As for the {\grank} in Proposition~\ref{prop:rankdefs}, we will also give equivalent definitions from an algebro-geometric point of view that look very similar.
\begin{Prop}\label{prop:mltdefs}
The integers defined in the following statements are equal to the {\mlt} of $G$.
\begin{compactenum}
 \item The smallest $k$ such that the vector space $\ideal{W}_2 + \ker(\pi_G) \subset \sym$ contains a positive definite matrix for a generic linear series $W\subset R_1$ of dimension $k$.
 \item The smallest $k$ such that a generic linear series $W\subset R_1$ of dimension $k$ is not in the kernel of a positive semidefinite matrix in the orthogonal complement of $\ker(\pi_G)$.
\end{compactenum}
\end{Prop}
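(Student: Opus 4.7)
The plan is to mirror the argument of Proposition~\ref{prop:rankdefs}, replacing ``surjectivity of the differential'' with ``containing a positive definite matrix'' and using convex separation via the self-duality of $\psd$. The starting point is the reformulation cited in the paper just before the proposition: $\pi_G(A)\in\tint(\Sigma_G)$ if and only if $A+\ker(\pi_G)$ contains a positive definite matrix. Under the isomorphism $M\leftrightarrow Q_M$, a positive semidefinite matrix $A$ of rank $r$ is written $A=\ell_1^2+\cdots+\ell_r^2$, and the linear series $W=\lspan\{\ell_1,\dots,\ell_r\}\subset R_1$ corresponds to the column span of $A$.

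The crux is a convex separation argument for the open self-dual cone $\pd$. The affine space $A+\ker(\pi_G)$ fails to meet $\pd$ if and only if there exists a nonzero positive semidefinite $B\in L_G$ with $\scp{A,B}=0$: any separating functional for the subspace and the open cone must be constant (hence zero) on $\ker(\pi_G)$ and nonnegative on $\psd$. Since $A$ and $B$ are both positive semidefinite, the vanishing $\tr(AB)=0$ is equivalent to $\im(A)\subseteq\ker(B)$, that is $W\subseteq\ker(B)$. This identifies the {\mlt} with the integer in statement~(2): the smallest $r$ such that a generic $r$-dimensional linear series $W\subset R_1$ is not contained in the kernel of any nonzero positive semidefinite matrix in $L_G$.

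For equivalence with (1), I would compute $\ideal{W}_2^\perp$ inside $\sym$ with respect to the trace pairing. The degree-two ideal $\ideal{W}_2=W\cdot R_1$ is spanned by the symmetric matrices $vw^t+wv^t$ with $v^tx\in W$ and $w\in\R^n$ arbitrary, and $\tr(B(vw^t+wv^t))=0$ for all $w$ forces $Bv=0$. Hence $\ideal{W}_2^\perp=\{B\in\sym:W\subseteq\ker(B)\}$, and consequently $(\ideal{W}_2+\ker(\pi_G))^\perp=\{B\in L_G:W\subseteq\ker(B)\}$. A second application of separation — a linear subspace of $\sym$ contains a positive definite matrix if and only if its orthogonal complement contains no nonzero positive semidefinite element — then yields the equivalence of (1) with (2).

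The main obstacle I expect is the bookkeeping of genericity across the three formulations. The paper's {\mlt} quantifies over ``almost all'' positive semidefinite matrices of rank $r$ in the strict algebraic sense recalled before the definition, while statements (1) and (2) quantify over generic linear series $W\in\gr(r,n)$. Because the defining condition $(A+\ker(\pi_G))\cap\pd\neq\emptyset$ depends only on $W=\im(A)$, the transfer of genericity amounts to checking that the bad locus $\{W\in\gr(r,n):\exists\,0\neq B\in L_G\cap\psd\text{ with }W\subseteq\ker(B)\}$ is Zariski-closed in $\gr(r,n)$ — it is the image of a closed semialgebraic subset of $\gr(r,n)\times(L_G\cap\psd)$ under the first projection — and then pulling back along the surjection $A\mapsto\im(A)$ from the positive semidefinite rank-$r$ stratum onto $\gr(r,n)$ produces the required proper algebraic subvariety of $V_r$.
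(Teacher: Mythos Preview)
Your argument is correct and follows essentially the same path as the paper's proof: the correspondence between a positive semidefinite $A$ of rank $r$ and the linear series $W=\im(A)$, combined with convex separation and the self-duality of $\psd$, is exactly what the paper uses. You route the argument as $\ml\leftrightarrow(2)$ and then $(1)\leftrightarrow(2)$, while the paper does $\ml\leftrightarrow(1)$ and then $(1)\leftrightarrow(2)$, but the ingredients are identical.

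One correction in your last paragraph: the bad locus in $\gr(r,n)$ is \emph{not} Zariski-closed in general, and your justification (image of a closed semialgebraic set under a proper projection) only yields a closed \emph{semialgebraic} set. Fortunately that is all you need: a semialgebraic subset of $\gr(r,n)$ either has nonempty Euclidean interior or is contained in a proper Zariski-closed subset, and pulling the latter back along the smooth surjection $A\mapsto\im(A)$ (with fibers of constant dimension $\binom{r+1}{2}$) gives the proper subvariety of $V_r$ that the paper's definition of ``almost all'' requires. The paper itself is terse here, simply asserting that generic linear series correspond to generic positive semidefinite matrices of rank $r$ via the face structure of $\psd$; your version spells out more of the mechanism but should replace ``Zariski-closed'' with ``closed semialgebraic''.
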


\begin{proof}
As before, we first show that our definition of the {\mlt} is equivalent to (1). First, let $A = \ell_1^2 + \dots + \ell_r^2$ be a generic positive semidefinite matrix of rank $r$ such that $\pi_G(A) = \pi_G(P)$ for a positive definite matrix $P$. Set $W = \lspan\{\ell_1,\dots,\ell_r\}\subset R_1$. Then $P\in \ideal{W}_2 + \ker(\pi_G)$. On the other hand, let $W = \lspan\{\ell_1,\dots,\ell_r\}\subset R_1$ be a linear series such that $\ideal{W}_2 + \ker(\pi_G)$ contains a positive definite matrix. Then the $G$-partial matrix $\pi_G(A)$ for $A = \ell_1^2 + \dots + \ell_r^2$ has a positive definite completion. We need to show that this holds for generic matrices. The choice of basis $\{\ell_1,\dots,\ell_r\}$ of $W$ corresponds to the choice of a point $\ell_1^2 + \dots + \ell_r^2$ in the relative interior of a face of $\psd$, so that generic linear series $W\subset R_1$ of dimension $r$ correspond to generic positive semidefinite matrices of rank $r$, see \cite[Section~II.12]{BarMR1940576}.

We now show that the integers in (1) and (2) are equal. By the separation theorem in convex geometry and the fact that the cone of positive semidefinite matrices is self-dual with respect to the trace inner product \cite[Exercise~IV.5.3.2]{BarMR1940576}, we know that the vector space $\ideal{W}_2 + \ker(\pi_G)$ contains a positive definite matrix if and only if there is no positive semidefinite matrix in the orthogonal complement $L_G$ that contains $W$ in its kernel. This shows that the integers in (1) and (2) are the same.
\end{proof}

The above reformulations of the {\grank} and the {\mlt} of a graph make it clear that the {\grank} is the natural algebraic relaxation of the {\mlt}, a fact first observed by Uhler~\cite{UhlerMR3014306}.
\begin{Cor}
The {\mlt} of a graph is bounded above by its {\grank}.
\end{Cor}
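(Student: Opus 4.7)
The plan is to use the algebro-geometric reformulations already in hand, namely Proposition~\ref{prop:rankdefs}(2) for the $\gcr$ and Proposition~\ref{prop:mltdefs}(2) for the $\ml$, together with the trivial inclusion that positive semidefinite matrices in $L_G$ form a subset of all matrices in $L_G$.

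Concretely, I would set $r = \gcr(G)$. Proposition~\ref{prop:rankdefs}(2) yields an $r$-dimensional linear series $W_0 \subset R_1$ that is not contained in the kernel of any nonzero matrix in $L_G$. The next step is to promote this single existence statement to a genericity statement by arguing that the ``bad locus''
\[
B = \bigl\{ W \in \gr(r, R_1) : W \subset \ker(M) \text{ for some } 0 \neq M \in L_G \bigr\}
\]
is a proper closed subvariety of the Grassmannian $\gr(r, R_1)$. Once $B$ is known to be closed and proper, a generic $W \in \gr(r, R_1)$ lies outside $B$, so it is not contained in the kernel of any nonzero matrix in $L_G$, and \emph{a fortiori} not in the kernel of any positive semidefinite matrix in $L_G$. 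Proposition~\ref{prop:mltdefs}(2) then delivers $\ml(G) \leq r = \gcr(G)$.

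The only nontrivial point is the closedness of $B$, which I expect to be the main (but still routine) obstacle. I would handle it by introducing the incidence variety $Z \subset \gr(r, R_1) \times \P(L_G)$ cut out by the Schubert-type determinantal conditions expressing $W \subset \ker(M)$; since $\P(L_G)$ is complete, the projection $Z \to \gr(r, R_1)$ is closed, and $B$ is its image. The existence of $W_0$ shows that $B$ is proper. Alternatively, one could run the entire argument through part (1) of the two propositions: $\ideal{W_0}_2 + \ker(\pi_G) = \sym$ trivially contains the identity matrix, which is positive definite, and the same openness argument upgrades this to a generic $W$. Either route reduces the corollary to the observation that the defining conditions for the $\ml$ are a relaxation of those for the $\gcr$.
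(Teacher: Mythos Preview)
Your proposal is correct and follows essentially the same approach as the paper, which simply cites Propositions~\ref{prop:rankdefs}(1) and~\ref{prop:mltdefs}(1) and leaves the passage from ``there exists a $W$'' to ``generic $W$'' implicit. You make this step explicit via the incidence-variety/closedness argument (using part~(2) rather than part~(1), though you note the part~(1) alternative as well), which is a welcome addition of rigor but not a genuinely different route.
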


\begin{proof}
This follows from the equivalent characterizations of the two invariants in Propositions~\ref{prop:rankdefs}(1) and \ref{prop:mltdefs}(1).
\end{proof}

We will study how {\grank} and {\mlt} behave under graph operations. It is straightforward to see that both invariants are non-increasing with respect to edge and vertex deletion. However, unlike a related parameter of Gram dimension introduced in \cite{LaVaMR3006041}, the properties $\operatorname{mlt} (G) \leq k$ and $\operatorname{gcr} (G) \leq k$ are not minor-closed as the following example shows.
\begin{Exm}\label{exm:minorclosed}
Start with the complete graph $K_4$ on $4$ vertices and divide every edge so that  we get the graph $G$ in Figure~\ref{fig:minorclosed}.

\begin{figure}[h]
\centering
\begin{tikzpicture}
  [place/.style={circle,draw,thick,inner sep = 2pt, minimum size = 10pt}]
  \node[fill=black] at (0,0) [place] (1) {};
  \node at (2,0) [place] (2) {};
  \node[fill=black] at (4,0) [place] (3) {};
  \node at (1,1) [place] (4) {};
  \node at (3,1) [place] (5) {};
  \node at (1,2) [place] (6) {};
  \node[fill=black] at (2,2) [place] (7) {};
  \node at (3,2) [place] (8) {};
  \node at (2,3) [place] (9) {};
  \node[fill=black] at (2,4) [place] (10) {};
\draw (1) -- (2) -- (3) -- (8) -- (10) -- (6) -- (1);
\draw (1) -- (4) -- (7) -- (9) -- (10);
\draw (3) -- (5) -- (7);
\end{tikzpicture}
\caption{The graph in Example~\ref{exm:minorclosed}.}
\label{fig:minorclosed}
\end{figure}
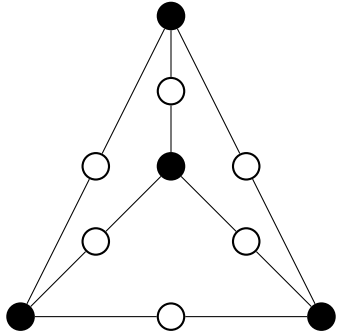

By construction, $K_4$ is a minor of $G$. The {\grank} and {\mlt} of $K_4$ are $4$. By inductively deleting vertices of degree $2$, we end up with an empty $3$-core, which shows that the {\grank} of $G$ is at most $3$ by \cite[Proposition~3.5]{GrossSullivant} (see also \cite[Theorem~3.7]{GrossSullivant}).
\end{Exm}

From the geometric point of view, we can understand the clique sum of graphs well.
\begin{Def}
 Let $G_1 = (V_1,E_1)$ and $G_2=(V_2,E_2)$ be two simple graphs such that the induced subgraphs on $V_1\cap V_2$ are cliques in $G_1$ and $G_2$, respectively. Their \emph{clique sum}, denoted by $G_1\boxtimes G_2$, is the graph $(V_1\cup V_2,E_1\cup E_2)$.
\end{Def}

The geometric operation corresponding to the clique sum of graphs is referred to as linear join in the algebraic geometry literature, see~\cite{EiGrHuPoMR2275024}.
\begin{Def}
Let $X_1$ and $X_2$ be two subspace arrangements in $\P^{n-1}$. We say that $X = X_1\cup X_2$ is \emph{linearly joined} if $X_1\cap X_2 = \langle X_1\rangle \cap \langle X_2 \rangle$, where $\langle X_i \rangle$ denotes the span of $X_i$ in $\P^{n-1}$.
\end{Def}

\begin{Prop}
Suppose $G$ is the clique sum of two subgraphs $G_1$ and $G_2$ along a clique $K = G_1\cap G_2$. Then $X_G = X_{G_1}\cup X_{G_2}$ is linearly joined along the coordinate subspace $X_K = \langle \{e_i\colon i\in K\} \rangle$. \qed
\end{Prop}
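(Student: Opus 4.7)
The plan is to verify the two set-theoretic equalities $X_G = X_{G_1} \cup X_{G_2}$ (so that the linear join decomposition makes sense) and $X_{G_1} \cap X_{G_2} = \langle X_{G_1}\rangle \cap \langle X_{G_2}\rangle = X_K$. All three statements reduce to combinatorial facts about cliques in a clique sum, once we use that a point $p = [a_1:\cdots:a_n] \in \P^{n-1}$ lies in $X_H = \V(I_H)$ if and only if the support $\{i\colon a_i\neq 0\}$ is a clique of $H$ (possibly empty or a singleton).

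First, I would describe the cliques of $G = G_1 \boxtimes G_2$. Since the edge set of $G$ is $E_1\cup E_2$, any two vertices $v\in V_1\setminus V_2$ and $w\in V_2\setminus V_1$ are non-adjacent in $G$ (the edge $\{v,w\}$ lies in neither $E_1$ nor $E_2$). Hence every clique of $G$ is entirely contained in $V_1$ or in $V_2$, and so is a clique of $G_1$ or a clique of $G_2$. Writing $X_G$ as the union of coordinate subspaces $\P(\mathrm{span}\{e_i\colon i\in C\})$ over all cliques $C$ of $G$, and similarly for $X_{G_1}$ and $X_{G_2}$, we obtain $X_G = X_{G_1}\cup X_{G_2}$.

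Next I would compute the spans. Singletons are cliques, so each $e_v$ with $v\in V_i$ lies in $X_{G_i}$, and conversely each clique subspace of $G_i$ is contained in $\langle\{e_v\colon v\in V_i\}\rangle$. Thus $\langle X_{G_i}\rangle = \langle\{e_v\colon v\in V_i\}\rangle$, and since these are coordinate subspaces of $\P^{n-1}$,
\[
\langle X_{G_1}\rangle \cap \langle X_{G_2}\rangle \;=\; \langle\{e_v\colon v\in V_1\cap V_2\}\rangle \;=\; \langle\{e_v\colon v\in K\}\rangle \;=\; X_K.
\]

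Finally I would compare this to the actual intersection $X_{G_1}\cap X_{G_2}$. By the support characterization, $p\in X_{G_1}\cap X_{G_2}$ if and only if its support is simultaneously a clique of $G_1$ and of $G_2$. Such a set must be contained in $V_1\cap V_2 = K$, and any subset of $K$ is a clique in both $G_1$ and $G_2$ because $K$ is a clique in each. Therefore $X_{G_1}\cap X_{G_2}$ is precisely the set of points whose support lies in $K$, which is $X_K$. Combined with the previous step this gives the linear join condition $X_{G_1}\cap X_{G_2} = \langle X_{G_1}\rangle \cap \langle X_{G_2}\rangle$. The argument is essentially an unpacking of definitions, and I do not anticipate any real obstacle; the only point requiring a little care is to include singletons (and the empty clique) when enumerating cliques, so that the spans of $X_{G_i}$ are the full coordinate subspaces supported on $V_i$ rather than something smaller.
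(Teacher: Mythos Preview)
Your argument is correct and is exactly the routine unpacking of definitions the paper intends: the paper states the proposition with a bare \qed{} and gives no proof, so there is nothing to compare against beyond noting that your verification of $X_G = X_{G_1}\cup X_{G_2}$ and $X_{G_1}\cap X_{G_2} = \langle X_{G_1}\rangle \cap \langle X_{G_2}\rangle = X_K$ via the support/clique description is precisely the justification the authors leave implicit.
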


We will show that the {\grank} is well-behaved under clique sums. We need the following Lemma in the proof of Theorem \ref{thm:rankcliquesum}.
\begin{lem}\label{lem:oneentry}
Let $M$ be an $n\times n$ symmetric matrix of full rank with complex entries. Let $a_0,a_1,\dots,a_n$, $b_1,b_2,\dots,b_{n+1}$ be complex numbers and consider the matrix
\[
P(t) = \begin{pmatrix}
a_0    & a_1 & \dots & a_n & t      \\
a_1    &     &       &     & b_1    \\
\vdots &     &   M   &     & \vdots \\
a_n    &     &       &     & b_n    \\
t      & b_1 & \dots & b_n & b_{n+1}
\end{pmatrix},
\]
which is symmetric of size $(n+2)\times (n+2)$.
\begin{compactenum}[(a)]
\item There exists a value $t_0\in\C$ such that the matrix $P(t_0)$ has rank $n+1$.
\item If the upper left $(n+1)\times (n+1)$ block has full rank $n+1$ and the bottom right $(n+1)\times (n+1)$ block has rank $n$, then there is a unique value $t_0\in\C$ such that $P(t_0)$ has rank $n$. This value is determined by the linear relation among the rows (or equivalently the columns) of the bottom right block.
\item If the upper left $(n+1)\times (n+1)$ as well as the bottom right $(n+1)\times (n+1)$ block have rank $n$ then there exists $t_0\in \C$ such that $P(t_0)$ has rank $n$. The entry $t_0$ is determined by the linear relations among the rows of the bottom right block.
\end{compactenum}
\end{lem}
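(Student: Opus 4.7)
The plan is to reduce the analysis of the $(n+2) \times (n+2)$ matrix $P(t)$ to a $2 \times 2$ Schur complement of $M$. Since $M$ is invertible, permuting rows and columns to bring $M$ to the bottom-right corner and applying the block LDU decomposition yields
\[
\det P(t) = \det(M)\,\det S(t), \qquad \rk P(t) = n + \rk S(t),
\]
where, writing $\mathbf{a} = (a_1, \ldots, a_n)^T$ and $\mathbf{b} = (b_1, \ldots, b_n)^T$,
\[
S(t) = \begin{pmatrix} \alpha & t - \gamma \\ t - \gamma & \beta \end{pmatrix},\qquad \alpha = a_0 - \mathbf{a}^T M^{-1} \mathbf{a},\ \ \beta = b_{n+1} - \mathbf{b}^T M^{-1} \mathbf{b},\ \ \gamma = \mathbf{a}^T M^{-1} \mathbf{b}.
\]
Taking the same Schur complement inside the upper-left (resp.\ bottom-right) $(n+1) \times (n+1)$ block shows that this block has full rank $n+1$ precisely when $\alpha \neq 0$ (resp.\ $\beta \neq 0$), so the hypotheses of (a)--(c) translate directly into sign conditions on $\alpha, \beta$, and the whole lemma reduces to analyzing the $2 \times 2$ matrix $S(t)$.

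For (a), I pick any complex square root and set $t_0 = \gamma + \sqrt{\alpha \beta}$: then $(t_0 - \gamma)^2 = \alpha \beta$ gives $\det S(t_0) = 0$, so $\rk P(t_0) \leq n + 1$, with equality as soon as at least one of $\alpha, \beta$ is nonzero. For (b), with $\alpha \neq 0$ and $\beta = 0$ we get $\det S(t) = -(t - \gamma)^2$, so $t_0 = \gamma$ is the unique root and $S(\gamma) = \diag(\alpha, 0)$ has rank $1$, giving $\rk P(\gamma) = n+1$. (The ``rank $n$'' in the stated conclusion of (b) appears to be a misprint for ``rank $n+1$'', since $\alpha \neq 0$ forces $\rk P(t) \geq n+1$ for every $t$.) For (c), with $\alpha = \beta = 0$ the matrix $S(t) = (t - \gamma)\bigl(\begin{smallmatrix} 0 & 1 \\ 1 & 0 \end{smallmatrix}\bigr)$ has rank $0$ at $t_0 = \gamma$ and rank $2$ elsewhere, so $\rk P(\gamma) = n$. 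In both (b) and (c) the quantity $\gamma = \mathbf{a}^T M^{-1} \mathbf{b}$ is exactly the last-coordinate constraint extracted from the unique (up to scaling) left-kernel vector $(-M^{-1} \mathbf{b},\, 1)$ (respectively $(-M^{-1} \mathbf{a},\, 1)$) of the bottom-right (respectively upper-left) $(n+1) \times (n+1)$ block, which matches the last sentences of (b) and (c).

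The main obstacle is simply to set up the Schur complement identities cleanly; once $\det P(t) = \det(M)\,\det S(t)$ and $\rk P(t) = n + \rk S(t)$ are in hand, each case becomes a one-line computation on $S(t)$, and the ``linear relations'' mentioned in the statement are exactly what determines the scalar $\gamma$.
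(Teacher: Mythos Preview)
Your argument is correct, and it is a genuinely different---and cleaner---route than the paper's. The paper treats the three parts by three separate devices: for (a) it just observes that $\det P(t)$ is quadratic in $t$ with leading coefficient $\det M$; for (b) it takes the Schur complement of the \emph{upper-left} $(n+1)\times(n+1)$ block $A$, then computes $A^{-1}$ via a second block-inversion formula to see that the determinant becomes a perfect square; for (c) it abandons Schur complements and argues directly with the row relations of the two corner blocks. By instead pivoting on the central block $M$ and reducing everything to the single $2\times 2$ Schur complement $S(t)$, you handle all three cases uniformly and make the roles of $\alpha,\beta,\gamma$ transparent. Your diagnosis of the ``rank $n$'' in the conclusion of (b) as a misprint for ``rank $n+1$'' is also correct: the upper-left block already has rank $n+1$, so $P(t)$ can never drop to rank $n$ in that case. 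One small caveat you essentially noticed yourself: in (a) the degenerate case $\alpha=\beta=0$ gives $\rk P(t)\in\{n,n+2\}$ and never $n+1$; the paper's proof of (a) has the same gap, so this is an issue with the statement rather than with your argument.
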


\begin{proof}
The coefficient of $t^2$ in the determinant of $P(t)$ is the determinant of $M$, which is non-zero. Any zero of the determinant is a value $t_0$ satisfying (a).
For part (b), we use Schur's determinantal formula \cite[Theorem~1.1]{ZhangMR2160825}. Then we have
\begin{eqnarray*}
\det(P(t)) & = & \det(A) \det(b_{n+1} - (t,b_1,\dots,b_n)A^{-1} (t,b_1,\dots,b_n)^t) \\
& = & \det(A)\left(b_{n+1} - (t,b_1,\dots,b_n)A^{-1} (t,b_1,\dots,b_n)^t\right),
\end{eqnarray*}
where $A$ is the upper left $(n+1)\times (n+1)$ block of $P(t)$, which is invertible by assumption. We show that this polynomial in $t$ is a square. Essentially, this is because $b_{n+1}$ is equal to $\ul{b}^tM^{-1} \ul{b}$, which follows from Schur's determinantal formula applied to the bottom right $(n+1)\times (n+1)$ block, where $\ul{b} = (b_1,\dots,b_n)^t$. We have
\[
A^{-1} = 
\begin{pmatrix}
1 & 0 \\
-M^{-1} \ul{a} & I_n\\
\end{pmatrix}
\begin{pmatrix}
a_0 - \ul{a}^t M^{-1}\ul{a} & 0 \\
0 & M^{-1}
\end{pmatrix}
\begin{pmatrix}
1 & - \ul{a}^t M^{-1} \\
0 & I_n
\end{pmatrix}
\]
by \cite[Theorem~1.2]{ZhangMR2160825} (see the proof). Combining these results using the symmetry of $P(t)$ shows that the determinant is indeed a square, namely
\[
\det(P(t)) = \left(a_0 - \ul{a}^t M^{-1}\ul{a}\right) \left(t-\ul{b}^tM^{-1}\ul{a}\right)^2,
\]
where $\ul{a} = (a_1,\dots,a_n)^t$. 
The zero of the determinant is the entry determined by the linear relations among the columns of the bottom right $(n+1)\times (n+1)$ block.
To prove part (c), write $v_1,\dots,v_n$ for the rows of the matrix $M$. Since the upper left $(n+1)\times (n+1)$ block of $P$ has rank $n$, there are $u_1,\dots,u_n$ such that $(a_0,a_1,\dots,a_n) = u_1(a_1,v_1) + u_2(a_2,v_2) + \dots + u_n(a_n,v_n)$. Arguing similarly for the bottom right $(n+1) \times (n+1)$ block, we find $u_1',u_2',\dots,u_n'$ such that $(b_1,\dots,b_n,b_{n+1}) = u_1'(v_1,b_1) + \dots + u_n'(v_n,b_n)$. Setting the missing entry to be $u_1b_1 + u_2b_2 + \dots + u_n b_n$ proves the claim. Indeed, it is enough to show that $u_1b_1 + u_2b_2 + \dots + u_n b_n = u_1' a_1 + u_2' b_2 + \dots + u_n' b_n$. This follows from symmetry of $M$ because
\begin{eqnarray*}
& (u_1,\dots,u_n,0) (b_1,\dots,b_{n+1})^t = (u_1,\dots,u_n,0) \left( u_1'(v_1,b_1) + \dots + u_n' (v_n,b_n)\right)^t \\
& = (0,u_1',\dots,u_n') \left(u_1 (a_1,v_1) + \dots + u_n (a_n,v_n)\right)^t = (0,u_1',\dots,u_n') (a_0,a_1,\dots,a_n)^t.
\end{eqnarray*}
\end{proof}

\begin{thm}\label{thm:rankcliquesum}
Let $G$ be the clique sum of two subgraphs $G_1$ and $G_2$. Then the {\grank} of $G$ is equal to the maximum of the {\grank s} of $G_1$ and $G_2$.
\end{thm}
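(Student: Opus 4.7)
The inequality $\gcr(G)\geq \max\{\gcr(G_1), \gcr(G_2)\}$ is the easy direction, since $G_1$ and $G_2$ are induced subgraphs of $G = G_1\boxtimes G_2$ obtained by deleting the vertices in $V_2\setminus V_1$ and $V_1\setminus V_2$, respectively, and $\gcr$ is non-increasing under vertex deletion as noted after Example~\ref{exm:minorclosed}.

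For the reverse inequality, set $r = \max\{\gcr(G_1),\gcr(G_2)\}$ and let $B$ be a generic $G$-partial matrix. The restriction from $G$-partial matrices to $G_i$-partial matrices is a surjective coordinate projection, so each $B|_{G_i}$ is a generic $G_i$-partial matrix and hence admits a complex symmetric $|V_i|\times|V_i|$ completion $A_i$ of rank at most $r$. Both $A_1$ and $A_2$ restrict on the common clique $K = V_1\cap V_2$ to the same matrix $M = B|_K$; since $K$ is a clique in each $G_i$ and $\gcr(K_{|K|})=|K|$, monotonicity gives $|K|\leq r$, and for generic $B$ the matrix $M$ is invertible.

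My plan is to splice $A_1$ and $A_2$ into a complex symmetric matrix $A$ of rank at most $r$ on $V_1\cup V_2$ via an Autonne--Takagi type factorization $A_i = L_i L_i^T$ over $\C$ with $L_i$ of size $|V_i|\times r$ (padding with zero columns if $\rk(A_i)<r$). Decompose the rows of $L_i$ into blocks $L_i^{V_i\setminus K}$ and $L_i^K$. Both $L_1^K$ and $L_2^K$ are $|K|\times r$ matrices of full row rank satisfying $L_i^K (L_i^K)^T = M$. The key algebraic fact is the existence of $O$ in the complex orthogonal group $O(r,\C) = \{O\in GL_r(\C) : OO^T = I_r\}$ such that $L_2^K = L_1^K O$: after normalizing by a symmetric complex square root $N$ of $M$, the rows of $N^{-1} L_i^K$ form orthonormal $|K|$-tuples in $\C^r$ with respect to the standard symmetric bilinear form, and since each such tuple extends to an orthonormal basis of $\C^r$ (Witt extension for a non-degenerate form), any two of them are related by an element of $O(r,\C)$. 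Replacing $L_2$ by $L_2 O^T$ preserves $A_2 = L_2 L_2^T$ but aligns its $K$-block with that of $L_1$; stacking the remaining rows above and below the common $K$-block gives $L$ of size $(|V_1|+|V_2|-|K|)\times r$, and $A := L L^T$ is a symmetric matrix of rank at most $r$ with $A|_{V_i} = A_i$ for $i=1,2$. Consequently $\pi_G(A) = B$.

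The main technical point is the existence of the aligning orthogonal $O\in O(r,\C)$; the rest of the gluing is a direct computation. An alternative strategy would fill in the missing cross entries $A_{v,w}$ (with $v\in V_1\setminus K$, $w\in V_2\setminus K$) one at a time using Lemma~\ref{lem:oneentry}, but controlling the rank of the full $|V|\times|V|$ matrix from that of $(|K|+2)\times(|K|+2)$ submatrices is not immediate, so I would favor the factorization approach outlined above.
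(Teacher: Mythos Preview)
Your argument is correct and follows a genuinely different route from the paper's proof. The paper completes the $G_1$- and $G_2$-blocks to ranks $r_1\leq r_2=r$ and then uses Lemma~\ref{lem:oneentry} repeatedly to \emph{grow} the common clique block one row at a time until both blocks share a full-rank $r\times r$ corner, after which the remaining off-diagonal entries are forced by the linear relations among columns. This is elementary but somewhat delicate bookkeeping. Your approach instead factors each completion as $A_i=L_iL_i^T$ with $L_i\in\C^{|V_i|\times r}$ and observes that the two $K$-blocks, having the same (generically invertible) Gram matrix $M$, differ by an element of $O(r,\C)$ via Witt's theorem; aligning them and stacking the outer rows produces the global rank-$r$ completion in one stroke. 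The trade-off is that the paper's argument is self-contained from Lemma~\ref{lem:oneentry}, while yours imports the (standard) facts that complex symmetric matrices of rank $\leq r$ admit such factorizations and that $O(r,\C)$ acts transitively on orthonormal $|K|$-frames for the bilinear form $x^Ty$; in exchange you avoid the inductive growing procedure entirely. Your remark that the entry-by-entry approach via Lemma~\ref{lem:oneentry} is ``not immediate'' is fair: the paper makes it work, but only after first enlarging the common block until its rank equals $r$, which is exactly the step your global alignment replaces.
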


\begin{proof}
Let $G$ be the clique sum of $G_1$ and $G_2$ along a clique $K\subset G$ of size $k$. We prove the claim by taking the geometric point of view. The {\grank} of $G$ is the smallest $r$ such that a generic matrix in $\C[X_G]_2 = (\C[x_1,\dots,x_n]/I_G)_2$ can be represented by a quadratic form in $\C[x_1,\ldots,x_n]_2$ of rank $r$. After relabeling the vertices of $G$, if necessary, we can assume that an extension of a quadratic form in $\C[X_G]_2$ is a block matrix

\begin{center}
\begin{tikzpicture}
\matrix (M) [matrix of math nodes,left delimiter={(},right
delimiter={)}]
{A_1 &   & ?    \\
     & C &      \\
?    &   & A_2  \\
};
\draw[black,thick] (M-1-1.north west) rectangle (M-2-2.south east);
\draw[black,thick] (M-2-2.north west) rectangle (M-3-3.south east);
\end{tikzpicture}
\end{center}
where the upper left block is the restriction to $X_{G_1}$ and the lower right block is the restriction to $X_{G_2}$. Note that they have a common $k\times k$ submatrix, denoted $C$, which is the restriction to $X_{G_1}\cap X_{G_2} = \scp{e_i\colon i\in K}$. The entries in the top right and lower left are undetermined in $\C[X_G]_2$ and choices of all unknown unspecified entries in this matrix give an extension to $\C[x_1,\dots,x_n]_2$.

If the entire matrix has a completion of rank $r$, then so do the top left block and bottom right block. This shows that the {\grank} of $G$ is at least the maximum of the {\grank s} of $G_1$ and $G_2$.

For the reverse inequality, let the {\grank s} of $G_1$ and $G_2$ be $r_1$ and $r_2$, respectively, and assume $r_1\geq r_2$. We will inductively construct a completion of rank $r_1$ using the preceding Lemma~\ref{lem:oneentry}. First, given generic entries, we can complete the blocks $A_1$ and $A_2$ to matrices of rank $r_1$ and $r_2$, respectively. Further, we can assume by genericity that the bottom right $r_1\times r_1$ block of $A_1$ has full rank and the top left $r_2\times r_2$ block of $A_2$ has full rank. If $r_2>k = \rk(C)$, we grow $A_2$ by one row in the following way (compare Figure~\ref{fig:comp}): Add generic entries (for the stars) in the new row until we are at the $r_2$-th column of $A_2$ (the triangle). Choose this entry such that the determinant of the $(r_2+1)\times (r_2+1)$ block formed with entries from $A_1$ in the new row is $0$ (Lemma~\ref{lem:oneentry}(a)), i.e.~the last column of this block is a linear combination of the first $r_2$ columns. By Lemma~\ref{lem:oneentry}(b), we can fill in the entire new row by the representation of every column of $A_2$ as a linear combination of the first $r_2$. In this way, we get an extension $A_2'$ of $A_2$ by one row such that the first $r_2$ columns of $A_2'$ are a basis of the column span of $A_2'$ and the top left $r_2\times r_2$ block has full rank. This process enlarges the common block $C$ of $A_1$ and $A_2$ and grows its rank by $1$. So we can inductively add rows to $A_2$ until $r_2 = k$. 

\begin{figure}[h]
\includegraphics[scale = 0.6]{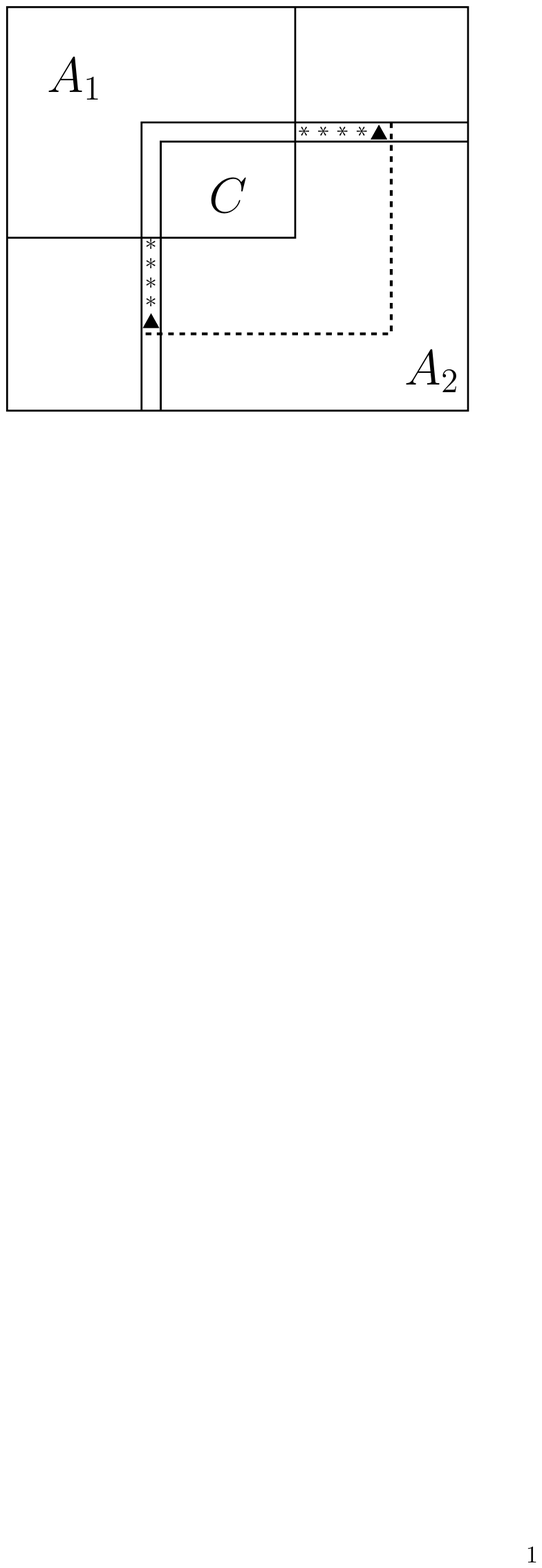}
\caption{Visualization of the growing step in the proof of Theorem~\ref{thm:rankcliquesum}.}
\label{fig:comp}
\end{figure}

If $r_1>r_2 = k$, we add one row to $A_2$ similarly as before: Fill in each entry of the row according to the linear relation of the columns by Lemma~\ref{lem:oneentry}(b). The obtained extension $A_2'$ of $A_2$ will have rank $r_2+1$, because the first $r_2+1$ columns of $A_2'$ are now a basis of the column span of $A_2'$.

So we have now reduced to the case that $r_1 = r_2 = k$. Now we fill in the missing entries in the top right block by the linear relations among the rows of $A_1$, which gives the same as using the linear relations among the rows of $A_2$ in the bottom left block by Lemma~\ref{lem:oneentry}(c). In the end, we get a completion of rank $r_1$, which proves the claim.
\end{proof}

We now examine behavior of the \mlt under clique sums. We begin with a Lemma.

\begin{lem}\label{lemma:dualcones}
Let $G$ be the clique sum of $G_1$ and $G_2$. Then a quadratic form $f\in\R[X_G]_2$ lies in the interior of $\Sigma_G$ if and only if its restrictions to $X_{G_1}$ and $X_{G_2}$ lie in the interior of $\Sigma_{G_1}$ and $\Sigma_{G_2}$, respectively.
\end{lem}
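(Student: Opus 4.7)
The plan is to prove the two directions separately, using the characterization that the interior of $\Sigma_G$ consists of $G$-partial matrices admitting a positive definite completion.

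The ``only if'' direction is immediate. If $f \in \tint(\Sigma_G)$, pick a positive definite completion $P$ of $f$. The principal submatrix of $P$ indexed by $V(G_i)$ is positive definite and agrees with $f|_{X_{G_i}}$ on the specified entries of $G_i$, so it is a positive definite completion of $f|_{X_{G_i}}$ for $i = 1, 2$.

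For the ``if'' direction, the plan is to glue positive definite completions along the clique. Let $M_i$ be a positive definite completion of $f|_{X_{G_i}}$ for $i = 1, 2$. Since $K := G_1 \cap G_2$ is a clique in $G$, every entry of $f$ indexed by $K \times K$ is specified, so the principal submatrices of $M_1$ and $M_2$ on $K \times K$ both equal the same matrix $C$, and $C \succ 0$ as a principal submatrix of $M_1$. After reordering vertices, we can write
\[
M_1 = \begin{pmatrix} A & B \\ B^T & C \end{pmatrix}, \qquad M_2 = \begin{pmatrix} C & D^T \\ D & E \end{pmatrix},
\]
and the only remaining undetermined block in a symmetric extension
\[
P(X) = \begin{pmatrix} A & B & X \\ B^T & C & D^T \\ X^T & D & E \end{pmatrix}
\]
is the off-diagonal block $X$ between $V(G_1) \setminus K$ and $V(G_2) \setminus K$, precisely because $G = G_1 \boxtimes G_2$ contains no edges between these sets.

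The key step is to choose $X = B C^{-1} D^T$. With this choice, the Schur complement of the (positive definite) middle block $C$ in $P(X)$ becomes block diagonal:
\[
\begin{pmatrix} A - B C^{-1} B^T & 0 \\ 0 & E - D C^{-1} D^T \end{pmatrix}.
\]
Each diagonal block is a Schur complement of $C$ inside the positive definite matrix $M_i$, hence positive definite. By the Schur complement criterion, $P(X) \succ 0$, which is the desired positive definite completion of $f$, so $f \in \tint(\Sigma_G)$. The only non-routine point is spotting the choice $X = B C^{-1} D^T$, which is the standard gluing trick for positive definite completions along a clique; once chosen, the argument reduces to a direct Schur complement computation.
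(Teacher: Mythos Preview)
Your proof is correct, but it proceeds by a genuinely different route than the paper. The paper argues on the dual side: invoking \cite[Corollary~5.4]{BlekhermanSinnVelasco} for linearly joined varieties, it obtains $\Sigma_G^\vee = \conv(\Sigma_{G_1}^\vee \cup \Sigma_{G_2}^\vee)$ with each $\Sigma_{G_i}^\vee$ a face, hence the extreme rays of $\Sigma_G^\vee$ are exactly those of the two parts, and then the separation theorem gives the interior characterization. You instead work primally, using the identification of $\tint(\Sigma_G)$ with $G$-partial matrices having a positive definite completion, and build such a completion by the classical Schur-complement gluing $X = BC^{-1}D^T$ along the clique (essentially the step underlying the chordal PSD completion theorem \cite{PSDComp}). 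Your argument is more elementary and self-contained; the only minor caveat is the degenerate case $K=\emptyset$, where one simply takes $X=0$. Note, however, that the paper's proof establishes more than the lemma states: the description of the extreme rays of $\Sigma_G^\vee$ is invoked later in the proof of Theorem~\ref{thm:mltcliquesum}, so if you replace the paper's argument with yours, that extreme-ray fact would need to be supplied separately.
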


\begin{proof}
Since $G$ is a clique sum of graphs, $X_G = X_{G_1}\cup X_{G_2}$ is linearly joined. By \cite[Corollary~5.4]{BlekhermanSinnVelasco}, we have $\Sigma_G^\vee = \conv(\Sigma^\vee_{G_1}\cup \Sigma^\vee_{G_2})$, where both cones on the right hand side are faces of $\Sigma_G^\vee$. In particular, the union of extreme rays of $\Sigma_G^\vee$ is the union of the extreme rays of $\Sigma_{G_1}^\vee$ and $\Sigma_{G_2}^\vee$. By the separation theorem in convex geometry, a point $x$ is in the interior of $\Sigma_G$ if and only if for every extreme ray $\R_+ \ell$ of $\Sigma_G^\vee$ we have $\ell(x)>0$. This proves the claim.
\end{proof}

\begin{Thm}\label{thm:mltcliquesum}
Let $G$ be the clique sum of $G_1$ and $G_2$. The {\mlt} of $G$ is the maximum of the \mlt s of $G_1$ and $G_2$.
\end{Thm}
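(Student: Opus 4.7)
The plan is to prove both inequalities by combining Lemma~\ref{lemma:dualcones} with the observation that a generic positive semidefinite matrix of rank $r$ restricts, on any principal submatrix, to a generic positive semidefinite matrix of the appropriate rank. Concretely, writing $A = BB^T$ with $B$ a generic $n\times r$ matrix, the principal submatrix $A_i$ of $A$ indexed by $V(G_i)$ equals $B_i B_i^T$, where $B_i$ collects the rows of $B$ indexed by $V(G_i)$. Hence $A_i$ is a generic positive semidefinite matrix of rank $\min(r,|V(G_i)|)$. I will also use that $\ml(G_i)\leq|V(G_i)|$, since a generic positive semidefinite matrix of full size is positive definite and trivially completes itself.

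For the upper bound, I set $r=\max\{\ml(G_1),\ml(G_2)\}$ and let $A$ be a generic positive semidefinite matrix of rank $r$. For each $i$, $A_i$ is a generic positive semidefinite matrix of rank $\min(r,|V(G_i)|)\geq\ml(G_i)$, so by definition $\pi_{G_i}(A_i)\in\tint(\Sigma_{G_i})$. Since $\pi_{G_i}(A_i)$ is exactly the restriction of $\pi_G(A)$ to the entries indexed by vertices and edges of $G_i$, Lemma~\ref{lemma:dualcones} yields $\pi_G(A)\in\tint(\Sigma_G)$, so $\ml(G)\leq r$.

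For the lower bound, I set $r=\ml(G)$ and take a generic positive semidefinite matrix $A$ of rank $r$, so that $\pi_G(A)\in\tint(\Sigma_G)$. Lemma~\ref{lemma:dualcones} then gives $\pi_{G_i}(A_i)\in\tint(\Sigma_{G_i})$ for $i=1,2$. As $A$ varies over a Zariski-dense open subset of the variety of rank-$r$ positive semidefinite matrices, $A_i$ traces out a Zariski-dense subset of the variety of positive semidefinite matrices of rank $\min(r,|V(G_i)|)$. This forces $\min(r,|V(G_i)|)\geq\ml(G_i)$, whence $r\geq\ml(G_i)$.

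The conceptual heart of the argument is Lemma~\ref{lemma:dualcones}, which reduces the clique-sum question to a statement about principal submatrices. The main remaining obstacle is only the genericity bookkeeping: I must ensure that the image of a Zariski-dense set of rank-$r$ positive semidefinite matrices under the principal-submatrix map remains Zariski-dense in the smaller rank variety. The parametrization $A=BB^T$ makes this transparent, since the induced map $B\mapsto B_i$ is a surjective coordinate projection between spaces of generic matrices, and the target variety is irreducible.
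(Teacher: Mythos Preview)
Your proof is correct and takes essentially the same approach as the paper: both directions rest on Lemma~\ref{lemma:dualcones} together with the fact that a principal submatrix of a generic rank-$r$ positive semidefinite matrix is itself generic of rank $\min(r,|V(G_i)|)$. The paper argues the lower bound contrapositively (a Euclidean-open set of rank-$r$ matrices mapping to $\partial\Sigma_{G_i}$ lifts to one for $G$), which is your direct argument read backwards; the only imprecision in your write-up is that ``Zariski-dense'' in the lower bound should be ``contains a Zariski-open dense subset'' for the conclusion $\ml(G_i)\le\min(r,|V(G_i)|)$ to follow, but your final paragraph on the surjective coordinate projection $B\mapsto B_i$ already supplies exactly this.
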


\begin{proof}
We assume, after relabeling of the vertices, if necessary, that $G_1 = ([p],E_1)$ and $G_2 = ([q]+r, E_2)$, so that $\dim \langle X_{G_1} \rangle = p-1$ and $\dim \langle X_{G_2} \rangle = q-1$. Then the composition of the projection $\pi_G\colon \sym[n] \to \R[X_G]_2$ with the restriction map ${\rm res}_1\colon \R[X_G]_2 \to \R[X_{G_1}]_2$ is the projection $\pi_1\colon \sym[p] \to \R[X_{G_1}]_2$ of the upper left $p\times p$ block of the $n\times n$ matrix. 
The analogous statement for the lower right $q\times q$ block holds for $\pi_2\colon \sym[q] \to \R[X_{G_2}]_2$. 

First, we show $\ml(G)\leq \max\{\ml(G_1),\ml(G_2)\}$. Let $r = \max\{\ml(G_1),\ml(G_2)\}$ and let $A$ be a generic positive semidefinite rank $r$ matrix in $\sym[n]$. Then the upper left $p\times p$ block $A_1$ and the lower right $q\times q$ block $A_2$ of $A$ are generic matrices of rank $\min\{r,p\}$ and $\min\{r,q\}$, respectively, with the property that the restriction of $\pi(A)$ to $X_{G_1}$ and $X_{G_2}$ are the projections $\pi_1(A_1)$ and $\pi_2(A_2)$ of the blocks. By definition of the {\mlt}, we know that $\pi_1(A_1)$ is an interior point of $\Sigma_{G_1}$ and $\pi_2(A_2)$ is an interior point of $\Sigma_{G_1}$. So $\pi(A)$ is an interior point of $\Sigma_G$ by Lemma~\ref{lemma:dualcones}, which shows the desired inequality.

Now, we show the inequality $\ml(G_1)\leq \ml(G)$. Suppose $r+1 = \ml(G_1)$. Then there exists a positive semidefinite $p\times p$ matrix $A_1$ and a semialgebraic neighborhood $U$ of $A_1$ inside the variety of $p\times p$ matrices of rank at most $r$ such that every matrix $A\in U$ maps to the boundary of $\Sigma_{G_1}$ under $\pi_1$. The set $T$ of all positive semidefinite $n\times n$ matrices of rank $r$, whose upper left block of size $p\times p$ is a matrix in $U$ is a semialgebraic subset of the variety $\V_r^{n}$ of $n\times n$  matrices of rank $r$ that has nonempty interior.
So take $B\in T$. Then the upper left $p\times p$ block of $B$ maps to the boundary of $\Sigma_{G_1}$, which can be certified by an extreme ray of $\Sigma_{G_1}^\vee$, i.e.~there exists an extreme ray $\R_+\ell$ of $\Sigma_{G_1}^\vee$ such that $\ell(B) = 0$. 
Now the fact that this extreme ray is also an extreme ray of $\Sigma_G^\vee$, see Lemma~\ref{lemma:dualcones}, certifies that $\pi_G(B)$ is a boundary point of $\Sigma_G$ for every $B\in T$. This shows the reverse inequality.
\end{proof}

\begin{Cor}\label{cor:wheels}
The {\grank} and {\mlt} are equal for any graph which does not contain a wheel $W_k$ for $k\geq 5$ or a splitting of a wheel $W_k$ for $k\geq 4$ as an induced subgraph.
\end{Cor}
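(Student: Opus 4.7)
The strategy is to leverage Theorem~\ref{thm:rankcliquesum} and Theorem~\ref{thm:mltcliquesum} by reducing a graph $G$ in the stated class to an iterated clique sum of a short list of ``atomic'' graphs on which $\gcr$ and $\ml$ are already known to coincide. Once such a decomposition is in place, induction on the number of clique-sum operations yields
\[
\gcr(G_1 \boxtimes G_2) = \max\{\gcr(G_1),\gcr(G_2)\} = \max\{\ml(G_1),\ml(G_2)\} = \ml(G_1 \boxtimes G_2),
\]
which propagates the equality $\gcr = \ml$ from the atoms to all of $G$.

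The natural candidates for the atomic graphs are cliques $K_n$, induced cycles $C_k$ with $k\geq 4$, and the small wheel $W_4$. On each of these, $\gcr$ and $\ml$ agree by direct computation using Propositions~\ref{prop:rankdefs} and \ref{prop:mltdefs}: for a clique $K_n$ both invariants equal $n$; for a cycle $C_k$ with $k\geq 4$ both equal $3$, which is the classical chordal completion theorem (a generic rank $3$ positive semidefinite matrix admits a positive definite completion along any cycle, and the dimension count of Theorem~\ref{thm:dimcount} shows $\gcr \geq 3$); and for $W_4$ one checks directly that $\gcr(W_4) = \ml(W_4) = 4$.

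The main obstacle is the purely combinatorial structure theorem: a graph $G$ that avoids every induced $W_k$ with $k\geq 5$ and every induced splitting of $W_k$ with $k\geq 4$ admits an iterated clique-sum decomposition whose summands lie in the list above. I would prove this by induction on $|V(G)|$. If $G$ has a clique separator, split $G$ along it and apply the inductive hypothesis to each piece, each of which still avoids the forbidden induced subgraphs because they are preserved under taking induced subgraphs. Otherwise $G$ is ``prime'' with respect to clique sums, and the task is to show that the forbidden-subgraph hypothesis then forces $G$ to be either a clique, an induced cycle, or $W_4$. The intuition is that a prime graph with no clique separator that is neither a clique nor a cycle must contain either a long induced cycle with a chord structure yielding a splitting of some $W_k$, or else a vertex adjacent to many vertices of a long induced cycle yielding an induced $W_k$ with $k\geq 5$; only $W_4$ survives this analysis.

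Once the structural decomposition lemma is in place, the corollary follows immediately from the two clique-sum theorems and the base cases above, completing the induction.
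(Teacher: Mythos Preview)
Your overall strategy---decompose via clique sums into atoms on which $\gcr=\ml$ is already known, then apply Theorems~\ref{thm:rankcliquesum} and~\ref{thm:mltcliquesum}---is exactly the paper's approach. The difference, and the gap, is in the list of atoms and the structural lemma you propose.

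The claim that a prime graph (no clique separator) in this class must be a clique, an induced cycle, or $W_4$ is false. Take $K_{2,3}$: it is series-parallel, hence lies in the class (it certainly contains no induced wheel or wheel splitting on five vertices---it has only six edges), yet it has no clique separator. Indeed, its maximal cliques are single edges, and removing the two endpoints of any edge leaves a connected path on three vertices; removing any single vertex also leaves a connected graph. So $K_{2,3}$ is prime but is none of your proposed atoms. More generally, $2$-connected series-parallel graphs give an unbounded supply of such prime graphs, so no finite list of atoms of the kind you suggest will suffice. (Incidentally, $W_4$ as defined here is just $K_4$, so that case is already absorbed by the cliques.)

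The paper avoids this by citing a structural theorem of Johnson and McKee: the graphs excluding the stated induced subgraphs are precisely the clique sums of chordal graphs and series-parallel graphs. Chordal graphs are in turn clique sums of cliques, on which $\gcr=\ml$ trivially. For series-parallel graphs one has treewidth at most $2$, so $\ml(G)\le\gcr(G)\le\tau(G)+1\le 3$; the matching lower bound comes from Buhl and Uhler, giving $\gcr=\ml$ on that class as well. Your argument becomes correct once you replace ``cycles and $W_4$'' by ``series-parallel graphs'' in the atom list and invoke Johnson--McKee for the decomposition rather than attempting to prove a sharper (and false) structural lemma from scratch.
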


The wheel $W_k$ is a graph on $k$ vertices which is composed of a cycle $C_{k-1}$ on $k-1$ nodes together with an additional vertex adjacent to every vertex of $C_{k-1}$.

\begin{proof}
By a result of Johnson and McKee \cite{JohnsonMcKeeMR1415290}, these graphs are clique sums of chordal graphs and series-parallel graphs. 
Moreover, results of \cite{BuhlMR1241392} and \cite{UhlerMR3014306} imply that {\grank} and {\mlt} also agree on series-parallel graphs, which are graphs of treewidth at most $2$. Therefore, for clique sums of chordal and series-parallel graphs, {\grank} and {\mlt} are equal by Theorems~\ref{thm:rankcliquesum} and \ref{thm:mltcliquesum}.
\end{proof}

We will see in Section~\ref{sec:bipartite} that {\grank} and {\mlt} are not equal in general. 

The sparsity order of a graph $G$ is the largest rank of an extreme ray of $\Sigma_{G}^\vee$, see \cite{AHMRMR960140} or \cite{LaurentSparsityOrder}. The positive semidefinite matrix completion theorem \cite[Theorem~7]{PSDComp} implies that the sparsity order of $G$ is $1$ if and only if $G$ is chordal. We now prove an inequality which relates the sparsity order and the generic completion rank of $G$.

\begin{Prop}
Let $G$ be a graph on $n$ vertices. Then the sum of the {\grank} of $G$ and its sparsity order is at most $n+1$.
\end{Prop}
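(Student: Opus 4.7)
The plan is to use the reformulation of the {\grank} in Proposition~\ref{prop:rankdefs}(2): it is the smallest $k$ for which there exists a $k$-dimensional linear series $W\subset R_1 \cong \R^n$ not contained in the kernel of any nonzero matrix in $L_G$. Under the trace inner product, the dual cone $\Sigma_G^\vee$ is identified with $L_G \cap \psd$, so the sparsity order $s$ is the maximal rank of an extreme ray of this cone. To prove $\gcr(G) + s \leq n+1$, I will exhibit a single linear series $W$ of dimension $n+1-s$ with the required property.

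The construction: let $M_0 \in L_G \cap \psd$ be an extreme ray of rank $s$, set $K_0 = \ker(M_0) \subset \R^n$ (so $\dim K_0 = n-s$), and pick any vector $v \in \R^n \setminus K_0$. Define $W = K_0 + \R v$, a subspace of dimension $n-s+1$. To show $W$ works, I would argue by contradiction: suppose some nonzero $M \in L_G$ satisfies $W \subseteq \ker M$. Then $K_0 \subseteq \ker M$, and symmetry of $M$ forces $M$ to preserve $K_0^\perp$. Because $M_0$ restricted to $K_0^\perp$ is positive definite, for all sufficiently small $\epsilon > 0$ both matrices $M_0 \pm \epsilon M$ lie in $L_G \cap \psd$. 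Writing $M_0$ as the midpoint of these two and using that $M_0$ is an extreme ray of $L_G \cap \psd$ gives $M \in \R M_0$. But $v \in \ker M$ while $v \notin K_0 = \ker M_0$, which forces $M = 0$, contradicting our assumption.

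The most delicate point is the pencil step, namely checking that $M_0 + tM \in L_G \cap \psd$ for small $|t|$ using only $K_0 \subseteq \ker M$ and $M \in L_G$. This reduces to the elementary observation that a psd matrix is strictly positive on the orthogonal complement of its kernel, so a symmetric perturbation that already vanishes on $K_0$ stays psd under small scaling. Once this is in place, the extremality of $M_0$ in $L_G \cap \psd$ together with the freedom of $v$ to lie outside $K_0$ immediately close the argument, and Proposition~\ref{prop:rankdefs}(2) yields the claimed inequality. It is worth noting that the bound is sharp on $G = K_n$ (where $s = 1$ and $\gcr = n$) and on cycles $C_n$ (where $s = n-2$ and $\gcr = 3$), which serve as useful sanity checks for the construction.
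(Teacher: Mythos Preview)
Your proof is correct and self-contained. The paper's argument is different in packaging: it invokes characterization~(1) of Proposition~\ref{prop:rankdefs} together with an external result of Blekherman (\cite[Proposition~4.2]{BlMR3272733}) stating that the kernel of an extreme ray of $\Sigma_G^\vee$ already generates a hyperplane in $R_2$; from there, adding one generic linear form generates all of $R_2$, so $\gcr(G)\leq n-s+1$. Your route uses characterization~(2) instead and supplies the convexity argument directly: the pencil $M_0\pm\epsilon M$ combined with extremality of $M_0$ forces any $M\in L_G$ with $K_0\subset\ker M$ to lie on the line $\R M_0$, and the extra vector $v\notin K_0$ then kills it. In effect you are reproving the cited lemma inline, which makes your version more elementary and removes the external dependence, at the cost of a few more lines. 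Both approaches are dual manifestations of the same fact, mediated by the equivalence in Proposition~\ref{prop:rankdefs}.
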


\begin{proof}
Let $k$ be the sparsity order of $G$. Then the kernel of an extreme ray of rank $k$ is an $(n-k)$-dimensional linear series in $R_1$. A result of Blekherman \cite[Proposition~4.2]{BlMR3272733} shows that such a linear series will generate a hyperplane in $R_2$. This means that a generic linear series of dimension $n-k$ generates at least a hyperplane in $R_2$, so a generic linear series of dimension $n-k+1$ will certainly generate all of $R_2$. This implies that the {\grank} is at most $n-k+1$ by Proposition~\ref{prop:rankdefs}(1), which proves the claimed inequality.
\end{proof}

This estimate is sharp in some cases. 
\begin{Exm}
Consider a complete bipartite graph $K_{m,n}$ on $m+n$ vertices. Assume $m\leq n$. We will see in Section~\ref{sec:bipartite} that the {\grank} of $K_{m,n}$ is $m$ as long as $n\leq \binom{m}{2}$ and $m+1$ otherwise. The sparsity order of complete bipartite graphs is determined in \cite[Theorem~2.1]{GrPiMR1057063}: it is $n$ as long as $n\leq \binom{m}{2}+1$ and $\binom{m}{2}+1$ otherwise. So the sum of the {\grank} and the sparsity order is always at most $m+n$ and it is equal to $m+n$ only if $n= \binom{m}{2}+1$.
\end{Exm}

\section{Complete Bipartite Graphs}\label{sec:bipartite}
Let $n\geq m \geq 2$ be positive integers and denote by $K_{m,n}$ the complete bipartite graph on two color classes of size $m$ and $n$, respectively. We assume $m\geq 2$ so that $K_{m,n}$ is not a tree. In this section, we will first determine the {\grank} of $K_{m,n}$ and then its {\mlt}.

\begin{Thm}\label{thm:rankbipartite}
Let $n\geq m \geq 2$. The {\grank} of $K_{m,n}$ is $m+1$ if $n>\binom{m}{2}$. It is $m$ if $n\leq \binom{m}{2}$.
\end{Thm}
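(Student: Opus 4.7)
The plan is to use Proposition~\ref{prop:rankdefs}(2): $\gcr(K_{m,n})$ is the smallest $k$ for which some $k$-dimensional $W\subset R_1$ is not contained in the kernel of any nonzero $M\in L_G$. For $K_{m,n}$, the space $L_G$ consists of block matrices $M=\left(\begin{smallmatrix}D_1 & C\\ C^T & D_2\end{smallmatrix}\right)$ with $D_1,D_2$ diagonal of sizes $m$ and $n$ and $C\in\R^{m\times n}$ arbitrary. Writing a basis of $W$ as rows $(a_i,b_i)$ of matrices $A\in\R^{k\times m}$ and $B\in\R^{k\times n}$, the condition $W\subseteq\ker M$ becomes the pair of matrix equations
\[
AD_1+BC^T=0,\qquad AC+BD_2=0.
\]

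The central reduction applies to the case $k=m$, where for generic $W$ the matrix $A$ is invertible. The first equation lets me solve $C=-D_1A^T(BB^T)^{-1}B+K$ for an arbitrary $K\in\R^{m\times n}$ with rows in $\ker B$; since $A$ is invertible, $K\mapsto AK$ is a bijection on the subspace of matrices with rows in $\ker B$, and substituting into the second equation yields the single symmetric-matrix compatibility
\[
AD_1A^T\;=\;BD_2B^T\qquad\text{in }\sym[m].
\]
Moreover $\ker A=0$ forces $C=0$ when $(D_1,D_2)=0$, so nonzero $M$'s correspond bijectively to nonzero pairs $(D_1,D_2)$ in the kernel of the linear map $\R^{m+n}\to\sym[m]$, $(D_1,D_2)\mapsto\sum_i d_i^{(1)}A_{*i}A_{*i}^T-\sum_j d_j^{(2)}B_{*j}B_{*j}^T$.

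For the lower bound $\gcr\geq m$, I do not need this reduction: for $k\leq m-1$, specialise $D_1=D_2=0$; then a nonzero $C$ with $AC=0$ and $BC^T=0$ exists because its columns must lie in $\ker A$ (of dimension $m-k\geq 1$) and its rows in $\ker B$ (of dimension $n-k\geq 1$), yielding a space of dimension $(m-k)(n-k)\geq 1$. For the lower bound $\gcr\geq m+1$ when $n>\binom{m}{2}$, the source dimension $m+n$ exceeds $\binom{m+1}{2}=\dim\sym[m]$, so the linear map above has a nontrivial kernel for generic $W$; since the locus of $W\in G(m,R_1)$ admitting a nonzero $M\in L_G$ with $W\subseteq\ker M$ is Zariski closed (it is the image of $\{(M,W):W\subseteq\ker M\}\subset\P(L_G)\times G(m,R_1)$ under the proper projection), this generic statement propagates to every $W$ of dimension $m$.

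For the upper bound $\gcr\leq m+1$, the explicit linear series $W=\lspan(x_1,\ldots,x_m,\,y_1+\cdots+y_n)$ satisfies $\langle W\rangle_2=R_2$: the products $x_i^2$ and $x_iy_j$ are immediate, and the non-edge relations $y_jy_l=0$ in $R$ for $l\neq j$ give $(y_1+\cdots+y_n)\cdot y_j=y_j^2$. When $n\leq\binom{m}{2}$, for the upper bound $\gcr\leq m$ I pick generic $W$ of dimension $m$; injectivity of the linear map above amounts to linear independence in $\sym[m]$ of the $m+n\leq\binom{m+1}{2}$ rank-one matrices $A_{*i}A_{*i}^T,B_{*j}B_{*j}^T$ obtained from generic columns of $A$ and $B$, and this holds because the second Veronese $v_2(\P^{m-1})$ spans its ambient $\P(\sym[m])$, so any $N\leq\binom{m+1}{2}$ generic points on it are linearly independent. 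The main obstacle I anticipate is the clean derivation of the reduction to $AD_1A^T=BD_2B^T$, which requires careful bookkeeping of the constraint on $K$ and the image of $K\mapsto AK$ restricted to matrices with rows in $\ker B$.
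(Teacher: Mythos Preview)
Your proof is correct. The central reduction for $k=m$---that with $A$ invertible and $B$ of full row rank, a nonzero $M\in L_{K_{m,n}}$ with $W\subseteq\ker M$ exists if and only if $AD_1A^T=BD_2B^T$ has a nontrivial solution $(D_1,D_2)$---coincides with what the paper derives in the proof of Theorem~\ref{thm:rankm} (written there as $D_m-A^tD_nA=0$ after normalising the first block to $I_m$). Your anticipated obstacle is not one: once $A$ is invertible, the second equation determines $K$ uniquely, and the row constraint $BK^T=0$ collapses precisely to $AD_1A^T=BD_2B^T$.

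The surrounding arguments differ in several places. For $\gcr\le m$ when $n\le\binom{m}{2}$, the paper builds an explicit $W$ only at the extremal value $n=\binom{m}{2}$ (taking rows $e_i+e_j$ so that the coefficient matrix of the linear system in $D_n$ becomes a permutation matrix) and then invokes monotonicity of $\gcr$ under edge deletion for smaller $n$; your Veronese-nondegeneracy argument treats all $n\le\binom{m}{2}$ uniformly and is more conceptual, though nonconstructive. For $\gcr\le m+1$, the paper cites the treewidth bound via Remark~\ref{rem:treewidth} and \cite{GrossSullivant}; your explicit $(m{+}1)$-dimensional series $\lspan(x_1,\dots,x_m,y_1+\cdots+y_n)$ is self-contained and more elementary. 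For $\gcr\ge m$, the paper reads off the rank of the off-diagonal block directly (Remark~\ref{rem:Kmnrankbound}), while your construction of a nonzero $M$ with zero diagonal blocks stays entirely inside the Proposition~\ref{prop:rankdefs}(2) framework. Finally, the two dimension counts for $\gcr\ge m+1$ when $n>\binom{m}{2}$ are dual readings of the same inequality $m+n>\binom{m+1}{2}$: the paper compares $\dim V_m^{m+n}$ with the target of $\pi_{K_{m,n}}$, you compare source and target of $(D_1,D_2)\mapsto AD_1A^T-BD_2B^T$.
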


We will give a proof of this theorem below after a few preparations. The following remark is a well-known fact from graph theory.

\begin{Rem}\label{rem:treewidth}
If $m\geq 2$, the treewidth of $K_{m,n}$ is $m+1$.
\end{Rem}

\begin{Rem}\label{rem:Kmnrankbound}
The {\grank} of $K_{m,n}$ is at least $m$, simply because a $K_{m,n}$-partial matrix is of the form
\[
\begin{pmatrix}
D_1 & B \\
B^t & D_2
\end{pmatrix},
\]
where $B$ is an $m\times n$ matrix and $D_1$ is an $m\times m$ matrix, $D_2$ is an $n\times n$ matrix, both of which are only specified along their diagonals. So generically, a completion of a $K_{m,n}$-partial matrix will have rank at least $m$, which is the generic rank of $B$.
\end{Rem}

\begin{prop}\label{prop:Kmnlargen}
Suppose $n>\binom{m}{2}$. Then the {\grank} of $K_{m,n}$ is at least $m+1$.
\end{prop}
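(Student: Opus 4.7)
The plan is to invoke Proposition~\ref{prop:rankdefs}(2) and show that every $m$-dimensional linear series $W \subset R_1$ lies in the kernel of some nonzero matrix in $L_{K_{m,n}}$. By upper semi-continuity of $\dim \langle W\rangle_2$ on $\gr(m, R_1)$ (or equivalently, by lower semi-continuity of the dimension of the space of matrices in $L_G$ annihilating $W$), it suffices to verify this for a generic $W$, since $\langle W \rangle_2$ is the image of multiplication $W \otimes R_1 \to R_2$ and its dimension is maximized on a dense open subset.

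Decompose $R_1 = X \oplus Y$ with $X = \lspan(x_1,\dots,x_m) \cong \R^m$ and $Y = \lspan(y_1,\dots,y_n) \cong \R^n$. A generic $W$ of dimension $m$ projects isomorphically onto $X$, so $W = \{(v, Tv) : v \in \R^m\}$ for some $T \in \R^{n\times m}$. Any $M \in L_{K_{m,n}}$ has the block form
\[
M = \begin{pmatrix} D_1 & C \\ C^t & D_2 \end{pmatrix},
\]
where $D_1$ and $D_2$ are diagonal and $C \in \R^{m\times n}$ is arbitrary. The equation $M(v, Tv)^t = 0$ for all $v$ forces $D_1 + CT = 0$ and $C^t + D_2 T = 0$. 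The second gives $C = -T^t D_2$; substituting into the first yields the single matrix equation
\[
T^t D_2 T = D_1,
\]
i.e., $T^t D_2 T$ must be diagonal (after which $D_1$ is determined, and $C$ is determined from $D_2$).

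Writing $D_2 = \diag(e_1,\dots,e_n)$, the requirement that $T^t D_2 T$ be diagonal is $\binom{m}{2}$ homogeneous linear conditions on the vector $(e_1,\dots,e_n)\in \R^n$ (one condition $(T^t D_2 T)_{ij} = 0$ for each pair $i<j$). When $n > \binom{m}{2}$, the number of unknowns exceeds the number of equations, so a nontrivial $D_2$ always exists; the resulting $M$ is nonzero since $D_2 \neq 0$. This exhibits the required annihilating matrix for generic $W$, completing the argument.

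I do not expect a substantive obstacle here: the core of the argument is the straightforward linear-algebraic reduction of the condition $MW = 0$ to the single statement that $T^t D_2 T$ is diagonal, after which the bound $n > \binom{m}{2}$ is exactly what makes the homogeneous system underdetermined. The only mild subtlety is the passage from generic $W$ to all $W$, which is handled by a semi-continuity remark. (One could alternatively observe that $\binom{m}{2}$ is the Veronese bound: the coefficient matrix of the linear system, whose columns are the symmetric products $(t_{ik} t_{jk})_{i<j}$ of the columns of $T$, has rank exactly $\binom{m}{2}$ for generic $T$, though this sharper statement is not needed for the inequality.)
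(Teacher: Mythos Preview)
Your proof is correct, but it takes a different route from the paper's. The paper argues directly from the original definition of the \grank\ via a dimension count: the variety $V_m^{m+n}$ of symmetric $(m+n)\times(m+n)$ matrices of rank at most $m$ has dimension $\binom{m+1}{2}+mn$, while the target of $\pi_{K_{m,n}}$ has dimension $m+n+mn$; the hypothesis $n>\binom{m}{2}$ is exactly the inequality needed for the domain to have strictly smaller dimension than the target, so the projection cannot be dominant.

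Your argument instead invokes Proposition~\ref{prop:rankdefs}(2) and constructs an explicit annihilating matrix in $L_{K_{m,n}}$ for every (equivalently, for a generic) $m$-dimensional linear series. The block reduction to the condition that $T^tD_2T$ be diagonal, and the count of $\binom{m}{2}$ homogeneous linear conditions on the $n$ diagonal entries of $D_2$, are correct and give the result. This approach is a bit heavier than the paper's one-line dimension count, but it has the virtue of being constructive and of matching exactly the setup used in the paper's proof of Theorem~\ref{thm:rankm} (the case $n=\binom{m}{2}$), where the same linear system is analyzed and shown to have only the trivial solution for a specific $W$. So your method unifies the two cases, while the paper treats this proposition by a quicker, purely dimensional argument.
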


\begin{proof}
The {\grank} of $K_{m,n}$ is at least $m$ by the previous Remark~\ref{rem:Kmnrankbound}. It cannot be $m$ by dimension count. Indeed, the dimension of the variety $V_m^{m+n}$ of symmetric matrices of size $m+n$ and rank at most $m$ is $\binom{m+1}{2} + m\cdot n = \binom{m}{2} + m + m\cdot n$. Since $n> \binom{m}{2}$, the dimension $m+n+m\cdot n$ of the target of the projection $\pi_{K_{m,n}}\colon \sym[m+n] \to \R^{m+n} \oplus \R^{\# E}$ is larger than the dimension of $V_m^{m+n}$. Therefore, the restriction of the projection $\pi_{K_{m,n}}$ to $V_m^{m+n}$ cannot be dominant.
\end{proof}

\begin{thm}\label{thm:rankm}
Suppose $m\geq 2$ and $n = \binomial{m}{2}$. Then the {\grank} of $K_{m,n}$ is equal to $m$.
\end{thm}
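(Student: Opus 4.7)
By Remark~\ref{rem:Kmnrankbound}, $\gcr(K_{m,n}) \geq m$, so it suffices to exhibit an $m$-dimensional linear series $W \subset R_1$ with $\langle W \rangle_2 = R_2$ and then apply Proposition~\ref{prop:rankdefs}(1). The ring $R$ is the quotient of $\R[x_1,\dots,x_m,y_1,\dots,y_n]$ by $\langle x_ix_j : i<j\rangle + \langle y_k y_\ell : k<\ell\rangle$, so a monomial basis for $R_2$ consists of the $m+n+mn$ products $x_i^2$, $y_k^2$, and $x_i y_k$, and crucially $y_k y_\ell = 0$ in $R$ for $k\neq\ell$.

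The key idea is to exploit the special value $n=\binom{m}{2}$ by identifying the vertices of the larger color class with unordered pairs from $[m]$, writing $y_{\{i,j\}}$ for $1\leq i<j\leq m$. I would then try the correlated ansatz
\[
\ell_i \,:=\, x_i + \sum_{j\neq i} c_{ij}\, y_{\{i,j\}}, \qquad i=1,\dots,m,
\]
with nonzero scalars $c_{ij}$, and set $W := \lspan(\ell_1,\dots,\ell_m)$. The motivation is that tying the $y$-variable indexed by $\{i,j\}$ to exactly the $x$-variables $x_i$ and $x_j$ makes mixed $y\cdot y$ products vanish in $R$, turning the verification into an essentially triangular reduction.

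The steps of the verification that $\langle W \rangle_2 = R_2$ are:
\begin{compactenum}
\item For $i\notin\{k,\ell\}$, compute $\ell_i\cdot y_{\{k,\ell\}}$ in $R$: every $y\cdot y$ term vanishes, leaving $x_i y_{\{k,\ell\}}$. So $x_i y_{\{k,\ell\}}\in\langle W\rangle_2$ whenever $i\notin\{k,\ell\}$.
\item For $i\neq j$, compute $\ell_j\cdot x_i = \sum_{k\neq j} c_{jk}\, x_i y_{\{j,k\}}$. All summands with $k\notin\{i,j\}$ already lie in $\langle W\rangle_2$ by Step~1, so $c_{ji}\, x_i y_{\{i,j\}}\in\langle W\rangle_2$. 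Since $c_{ji}\neq 0$, also $x_i y_{\{i,j\}}\in\langle W\rangle_2$. Combined with Step~1, every $x_i y_{\{k,\ell\}}$ lies in $\langle W\rangle_2$.
\item Compute $\ell_i\cdot y_{\{i,j\}} = x_i y_{\{i,j\}} + c_{ij}\, y_{\{i,j\}}^2$ and subtract the $x_i y_{\{i,j\}}$-term (now available) to extract each $y_{\{i,j\}}^2$.
\item Compute $\ell_i\cdot x_i = x_i^2 + \sum_{k\neq i} c_{ik}\, x_i y_{\{i,k\}}$ and subtract the mixed terms to extract each $x_i^2$.
\end{compactenum}

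Every monomial basis element of $R_2$ thus lies in $\langle W\rangle_2$, so $\langle W\rangle_2 = R_2$, completing the proof. The main obstacle is guessing the right ansatz: a fully generic $m$-dimensional $W$ would produce a dense tangle of quadratic relations with $\binom{m}{2}$ syzygies that are hard to track (the naive dimension count $m(m+n) \geq m+n+mn$ leaves only $\binom{m}{2}$ slack), whereas the combinatorial identification of the second color class with pairs from $[m]$ makes the structure of the ideal $\langle W\rangle_2$ transparent enough to verify surjectivity monomial by monomial.
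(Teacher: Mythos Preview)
Your proof is correct. In fact, with the choice $c_{ij}=1$ your linear series $W$ coincides exactly with the paper's: the row span of $(I_m\ \ A^t)$ where the $\{i,j\}$th row of $A$ is $e_i+e_j$. The difference lies in the verification. The paper works on the dual side, via Proposition~\ref{prop:rankdefs}(2): it writes down the linear system that a matrix $M\in L_{K_{m,n}}$ must satisfy to contain $W$ in its kernel, reduces it to the equations $\sum_k \lambda_k a_{ki}a_{kj}=0$ in the diagonal entries of $D_n$, and checks that the coefficient matrix is a permutation matrix for this $A$, forcing $M=0$. You instead work on the primal side, via Proposition~\ref{prop:rankdefs}(1), and verify $\langle W\rangle_2=R_2$ by a direct monomial-by-monomial reduction in the quotient ring, exploiting that $y_{\{i,j\}}y_{\{k,\ell\}}=0$ unless the indices agree. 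Your argument is more elementary and self-contained (no Schur complements or coefficient matrices), while the paper's dual formulation is what feeds naturally into the later analysis of the {\mlt}, where one genuinely needs to understand the kernels of positive semidefinite matrices in $L_{K_{m,n}}$.
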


\begin{proof}
The {\grank} of $K_{m,n}$ is at least $m$. We show by example that a generic linear series of dimension $m$ will generate $R_2$, which implies that the {\grank} is equal to $m$ by Proposition~\ref{prop:rankdefs}(1).

Suppose not, then for every linear series $W\subset R_1$ of dimension $m$ there exists a matrix $M\in L_{K_{m,n}} = \ker(\pi_{K_{m,n}})^\perp \cong R_2^\ast$ such that $W\subset \ker(M)$. Write
\[
M = 
\begin{pmatrix}
D_m & B \\
B^t & D_n
\end{pmatrix},
\]
where $D_m$ and $D_n$ are diagonal matrices of size $m\times m$, resp.~$n\times n$.
By choosing an appropriate basis of $W$, we can assume that $W$ is the row span of $(I_m \quad A^t)$, where $A$ is an $n\times m$ matrix. The condition $W\subset \ker(M)$ is equivalent to
\[
\begin{pmatrix}
D_m + BA \\
B^t + D_n A
\end{pmatrix} = 
\begin{pmatrix}
D_m & B \\
B^t & D_n
\end{pmatrix}
\begin{pmatrix}
I_m \\
A
\end{pmatrix} = 0.
\]
This system of equations implies $D_m - A^tD_n A = 0$ by solving the second row for $B^t$ and plugging $B^t = -D_n A$ into the first equation. Given $A$, this is a system of $m+\binom{m}{2}$ linear equations in the $m+n$ variables, which are the diagonal entries of $D_m$ and $D_n$. Since the equations are in row-echelon form for the entries of $D_m$, we focus on the off-diagonal equations in the matrix $D_m - A^t D_n A = 0$ as equations in the diagonal entries of $D_n$. Write $D_n = \diag(\lambda_1,\lambda_2,\dots,\lambda_n)$ and $A = (a_{ij})_{1\leq i \leq n, 1\leq j \leq m}$. Then these equations are $\sum_{k=1}^n \lambda_k a_{ki} a_{kj} = 0$. The rows of the coefficient matrix $C$ of this linear system in the variables $\lambda_1,\lambda_2,\dots,\lambda_n$ are indexed by pairs $(i,j)$ for $1\leq i < j \leq m$ and the $(i,j)$th row is equal to $(a_{1i}a_{1j}, a_{2i} a_{2j}, \dots, a_{ni} a_{nj})$. We choose $A$ in such a way that the determinant of the coefficient matrix $C$ is non-zero. Then the only solution to this system is $D_n = 0$, which implies $D_m = 0$ and $B^t = 0$ by the above matrix equations. So $M = 0$, which implies that this linear series $W$ generates $R_2$.

We fix an order of the $n = \binom{m}{2}$ pairs $(i,j)$, where $1\leq i<j\leq m$. We set the $k$th row of $A$ to be $e_i+e_j$, where $(i,j)$ is the $k$th pair in our ordering and $e_i$ and $e_j$ are the $i$th, resp.~$j$th standard basis vectors of $\R^m$. This implies that there is exactly one entry $1$ in every row and column of the coefficient matrix $C$. Indeed, in the row of $C$ indexed by the pair $(i,j)$, only the entry $a_{ki} a_{kj}$ is non-zero and equal to $1$, where again $(i,j)$ is the $k$th pair in the ordering. Similarly, in the $k$th column of $C$, only the entry $a_{ki} a_{kj}$ is non-zero and equal to $1$. This shows that $C$ is a permutation matrix and therefore has full rank.
\end{proof}

\begin{proof}[Proof of Theorem~\ref{thm:rankbipartite}]
The {\grank} of $K_{m,n}$ is equal to $m$ in case $n = \binomial{m}{2}$ by Proposition~\ref{thm:rankm}. If $m\leq n<\binomial{m}{2}$, the {\grank} only decreases because we are deleting edges. On the other hand, it cannot drop below $m$ by Remark~\ref{rem:Kmnrankbound}, so it must be equal to $m$. If $n>\binomial{m}{2}$, then the {\grank} is larger than $m$ by Proposition~\ref{prop:Kmnlargen}. Since the treewidth of $K_{m,n}$ is $m+1$ by Remark~\ref{rem:treewidth}, it cannot be larger than $m+1$ by \cite[Proposition~1.3]{GrossSullivant}, which finishes the proof.
\end{proof}

We now turn to the {\mlt} of complete bipartite graphs. It turns out that it is different from the {\grank} for certain values of $n\geq m\geq 5$.

Below, we use the Grassmannian for a dimension count. Here is a short summary of affine coordinates on it, see e.g.~\cite{HarMR1416564}.
\begin{Rem}\label{rem:grassmannian}
Given a $k$-dimensional linear space $L\subset\R^m$, we choose a basis $v_1,\dots,v_k$ and put these vectors as the rows of a matrix. After row operations and possibly permutation of the columns, we can write this matrix as
$\begin{pmatrix}
I_k & A
\end{pmatrix}$,
where $A$ is a $k\times (m-k)$ matrix. The entries of $A$ are affine coordinates of the linear space $L$ on an affine chart of the Grassmannian $\gr(k,m)$ of $k$-dimensional linear subspaces of $\R^m$.
\end{Rem}

\begin{Thm}\label{thm:bipartite}
Let $n\geq m\geq 2$ and set $M$ to be the smallest $k$ such that $\binomial{k+1}{2}\geq m+n$. The {\mlt} of the complete bipartite graph $K_{m,n}$ is the minimum between $M$ and $m+1$.
\end{Thm}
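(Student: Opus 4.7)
The plan is to prove the upper bound $\ml(K_{m,n}) \le \min(M, m+1)$ and the matching lower bound $\ml(K_{m,n}) \ge \min(M, m+1)$, using Proposition~\ref{prop:mltdefs}(2) as the main criterion: $\ml(K_{m,n}) \le k$ iff for a generic $W \subset R_1 = \R^{m+n}$ of dimension $k$, no nonzero PSD matrix $N \in L_{K_{m,n}}$ has $W \subset \ker N$. Recall that $L_{K_{m,n}}$ consists of symmetric matrices of block form $\bigl(\begin{smallmatrix} D_1 & B \\ B^t & D_2 \end{smallmatrix}\bigr)$ with $D_1, D_2$ diagonal and $B$ arbitrary.

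For the upper bound, the inequality $\ml \le m+1$ is immediate from Buhl's bound $\ml(G) \le \tau(G)+1$ combined with Remark~\ref{rem:treewidth}. It remains to show $\ml \le M$. I observe that a symmetric $N \in L_{K_{m,n}}$ with $W \subset \ker N$ is exactly $N = \iota_{W^\perp} P \iota_{W^\perp}^t$ for $P \in \sym(W^\perp)$, subject to the linear conditions that $N$ vanishes at non-edges; this cuts the $\binom{m+n-M+1}{2}$-dimensional space of such extensions by $\binom{m}{2} + \binom{n}{2}$ linear equations. When the resulting family is $0$-dimensional for generic $W$ (the ``easy'' regime where the expected dimension is non-positive), the upper bound is immediate. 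In the ``tight'' cases (illustrated by $K_{5,5}$), the family is typically one-dimensional, and one must verify that its generator is indefinite. Since indefiniteness of a symmetric matrix is a Zariski-open condition and the family depends algebraically on $W$, it suffices to exhibit one explicit $W$ whose generator is indefinite: write $W$ as the row span of $(I_M \mid C)$ via the Grassmannian coordinates of Remark~\ref{rem:grassmannian} for a specific numerical $C$, solve for the unique generator $N_0$ in closed form, and compute its signature.

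For the lower bound, assume $k+1 = \min(M, m+1)$ and construct, for generic $W$ of dimension $k$, a nonzero PSD $N = VV^t \in L_{K_{m,n}}$ with $W \subset \ker N$. Take $V \in \R^{(m+n)\times r}$ with columns in $W^\perp$ and rows in each color class mutually orthogonal, say with $r = m$. Factoring the two blocks as $A = D_A Q_A$ and $B = D_B Q_B$ (padding by zero rows if $n > m$) gives
\[
N = \begin{pmatrix} D_A^2 & D_A R D_B \\ D_B R^t D_A & D_B^2 \end{pmatrix}, \qquad R = Q_A Q_B^t \in O(m),
\]
and $W \subset \ker N$ reduces to $D_A W_1 + R D_B W_2 = 0$, where $W_1, W_2$ are the two block-projections of a basis of $W$. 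Existence of an orthogonal $R$ with this property is equivalent to the Gram identity $W_1^t D_A^2 W_1 = W_2^t D_B^2 W_2$, a system of $\binom{k+1}{2}$ equations on the $2m$ variables $D_A^2, D_B^2$. The defining inequality $\binom{M}{2} < m+n$ of $M$, combined with $n \ge m$, implies $2m \ge M(M-1) = 2\binom{k+1}{2}$, so enough parameters are present. Solvability with positive entries is checked by exhibiting one explicit $W$ and invoking Zariski-openness. The case $M > m+1$ (where $\min = m+1$) is handled by a parallel construction using $r = m+1$.

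The main obstacle is the borderline upper-bound regime where the family of $L_{K_{m,n}}$-matrices annihilating $W$ has positive (but small) dimension, as in $K_{5,5}$: the naive dimension count alone is insufficient, and one genuinely has to show that the family's generator is indefinite for at least one $W$. Producing this explicit $W$ with computable signature, via Grassmannian coordinates and a direct signature calculation, is the technical heart of the proof; once done, Zariski-openness transports the conclusion to the generic $W$.
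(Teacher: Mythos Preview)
Your proposal has genuine gaps in both halves, and both stem from missing the same structural fact about $L_{K_{m,n}}\cap\psd[m+n]$.

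\textbf{Upper bound.} Your plan hinges on the linear family $F_W=\{N\in L_{K_{m,n}}:W\subset\ker N\}$ being $0$- or $1$-dimensional for generic $W$ of dimension $M$. This is false in almost all interesting cases. Since $\dim F_W\ge \dim L_{K_{m,n}}+\dim\sym(W^\perp)-\dim\sym[m+n]$, one computes for $K_{7,7}$ (where $M=5$) that $\dim F_W\ge 3$; for $K_{10,10}$ (where $M=6$) that $\dim F_W\ge 15$; and for $K_{m,m}$ with $m$ large that $\dim F_W$ grows like $m^2$. A high-dimensional linear space of symmetric matrices can very well meet $\psd[m+n]$ nontrivially, so ``the generator is indefinite'' is not even a meaningful condition, and no single numerical check plus Zariski-openness can rule out PSD elements across the whole family. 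The paper's upper bound (Lemma~\ref{lem:mltupperbound}) does \emph{not} work inside $F_W$ at all: it uses the Schur-complement fact that a PSD matrix in $L_{K_{m,n}}$ with nonzero diagonal has rank exactly $n$ and is parametrized as
\[
\begin{pmatrix}D_m^2 & -D_mB^tD_n\\ -D_nBD_m & D_n^2\end{pmatrix},\qquad B^tB=I_m,
\]
then counts dimensions of the map $(D_n,B,D_m,V)\mapsto$ kernel in $\gr(k,m+n)$. This parametrization of the \emph{PSD} locus is the missing idea; a count on all of $F_W$ cannot substitute for it.

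\textbf{Lower bound.} Taking $r=m$ forces the bottom block $B$ to be $n\times m$ with mutually orthogonal rows, so at most $m$ of its rows are nonzero and you genuinely have only $2m$ diagonal parameters. Your inequality ``$\binom{M}{2}<m+n$ together with $n\ge m$ gives $2m\ge M(M-1)$'' is simply wrong: for $K_{6,10}$ one has $M=6$, $\binom{M}{2}=15$, and $2m=12<15$; the Gram system $W_1^tD_A^2W_1=W_2^tD_B^2W_2$ is then overdetermined and generically has only the zero solution. The paper (Lemma~\ref{lem:mltlowerbound}) instead builds PSD matrices of rank $n$ via the same Schur-complement parametrization, obtaining $m+n$ diagonal parameters so that the $\binom{k+1}{2}$ Gram equations are underdetermined precisely when $\binom{k+1}{2}<m+n$, and then verifies existence of \emph{real} full-rank solutions (Lemma~\ref{lem:mltrealsolutions}). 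Your sketch would be salvageable on this side by taking $r=n$ and counting $m+n$ parameters, which then essentially reproduces the paper's argument; but as written the arithmetic and the rank choice are incorrect.
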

Note that the {\grank} of $K_{m,n}$ is $m$ or $m+1$, depending on the size of the larger part $n$ compared to $m$, and therefore, this is an upper bound on the {\mlt}. The above minimum is $M$ for $n\leq \binomial{m}{2}$ and $m+1$ for $n>\binomial{m}{2}$.

We prove the theorem by two dimension counts, which we state separately. The goal is to provide a lower bound and a matching upper bound using Proposition~\ref{prop:mltdefs}(2).
We write $\Sigma_{m,n} = \pi_{K_{m,n}}(\psd[m+n])$ for the cone of sums of squares of linear forms on the line arrangement $X_{K_{m,n}}$ in $\P^{m+n-1}$ defined by $K_{m,n}$ and $\Sigma_{m,n}^\vee$ for its dual cone. 
For matrices in the orthogonal complement $L_{K_{m,n}}$ of $\ker(\pi_{K_{m,n}})$, we fix the notation
\[
M = 
\begin{pmatrix}
D_m & A \\
A^t & D_n
\end{pmatrix},
\]
where $D_m$ is an $m\times m$ diagonal matrix, $D_n$ is an $n\times n$ diagonal matrix, and $A$ is an $m\times n$ matrix.

\begin{lem}\label{lem:mltlowerbound}
Let $n\geq m\geq 2$ and set $M$ to be the smallest $k$ such that $\binomial{k+1}{2}\geq m+n$. The {\mlt} of the complete bipartite graph $K_{m,n}$ is at least $\min\{M,m+1\}$.
\end{lem}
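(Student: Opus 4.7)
By Proposition~\ref{prop:mltdefs}(2), it suffices to show that for every $k<\min\{M,m+1\}$, a generic $k$-dimensional linear series $W\subset R_1$ is contained in the kernel of some non-zero positive semidefinite matrix in $L_{K_{m,n}}$.

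The plan is to parameterize $W$ on an affine chart of the Grassmannian $\gr(k,m+n)$ using Remark~\ref{rem:grassmannian} --- since $k\leq m$, we may write $W$ as the column span of $\begin{pmatrix} I_k \\ B_1' \\ B_2 \end{pmatrix}$ with $B_1'\in\R^{(m-k)\times k}$ and $B_2\in\R^{n\times k}$ --- and parameterize candidate PSD matrices $M$ satisfying $W\subset\ker M$ in the factored form $M = U\Sigma U^t$, where $U$ is an orthonormal basis of $W^\perp$ and $\Sigma\in\sym[m+n-k]$ is positive semidefinite. The condition $M\in L_{K_{m,n}}$ (vanishing of the off-diagonal entries of the two diagonal blocks) then becomes a system of $\binom{m}{2}+\binom{n}{2}$ linear equations on $\Sigma$. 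A direct count using $k\leq m$ together with $\binom{k+1}{2}<m+n$ yields
\[
\binom{m+n-k+1}{2} > \binom{m}{2}+\binom{n}{2},
\]
so the affine subspace of valid $\Sigma$'s has positive dimension.

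To conclude, I would exhibit an explicit pair $(W,\Sigma)$ with $\Sigma\succ 0$ satisfying the constraints --- for instance by choosing $B_1',B_2$ so that $W^\perp$ has a basis adapted to the bipartite block structure of $K_{m,n}$, making the identity matrix on $W^\perp$ a valid choice --- and then invoke openness of the positive-definite condition together with Zariski density to propagate the conclusion to a generic $W\in\gr(k,m+n)$.

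The main obstacle is precisely this positivity step: a positive-dimensional linear subspace of $\sym[m+n-k]$ need not intersect the positive-definite cone, and indeed at the threshold $k=M$ the dimension count can still be positive while no valid PD $\Sigma$ exists. Producing an explicit positive-definite witness exploiting the particular bipartite structure of $K_{m,n}$ and the freedom in the parameters $(B_1',B_2)$ is the technical heart of the argument; once the witness exists for one parameter value, upper semicontinuity of $\dim(\{M\in L_{K_{m,n}}: M\succeq 0,\ MW=0\})$ in $W$ transports the conclusion to a Zariski-dense subset of $\gr(k,m+n)$.
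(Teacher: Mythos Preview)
Your framework differs from the paper's, and the gap you yourself flag is real and not closed by the semicontinuity claim at the end.

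\textbf{On the semicontinuity step.} You propose to exhibit one pair $(W_0,\Sigma_0)$ with $\Sigma_0\succ 0$ and then propagate to generic $W$ via ``upper semicontinuity of $\dim\{M\in L_{K_{m,n}}:M\succeq 0,\ MW=0\}$''. This is not a valid inference. The dimension of the \emph{linear} space $\{\Sigma:C(W)\,\mathrm{vec}(\Sigma)=0\}$ is \emph{lower} semicontinuous in $W$ (the constraint matrix $C(W)$ has maximal rank generically and can only drop on a closed locus). Hence at a special $W_0$ the solution space may be strictly larger than at a nearby generic $W$, and a positive-definite $\Sigma_0$ sitting in the larger space need not survive in the smaller generic one. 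Your candidate $W_0=\lspan\{e_i+e_{m+i}:1\le i\le k\}$ is precisely such a special point: the projection $I-P_{W_0}$ lands in $L_{K_{m,n}}$ only because of the very symmetric choice of $W_0$, and you have not checked that the constraint rank there equals the generic rank. Without that, the openness argument collapses.

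\textbf{How the paper handles positivity.} The paper takes a structurally different route that makes the positivity issue tractable. Using the Schur complement, a positive semidefinite $M\in L_{K_{m,n}}$ of rank $n$ with nonzero diagonal is encoded by positive diagonal matrices $D_m,D_n$ and an isometric embedding $B\colon\R^m\hookrightarrow\R^n$. With a basis $(v_i,w_i)$ of $W$, the containment $W\subset\ker M$ becomes
\[
\scp{D_m v_i,D_m v_j}=\scp{D_n w_i,D_n w_j}\qquad(1\le i\le j\le k),
\]
which is $\binom{k+1}{2}$ \emph{linear} equations in the $m+n$ variables given by the squares of the diagonal entries of $D_m,D_n$. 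The count $\binom{k+1}{2}<m+n$ forces a nontrivial solution; positivity of $M$ reduces to these squares being real and nonzero. That last step is handled by Lemma~\ref{lem:mltrealsolutions}, which recasts the problem as dominance of an explicit morphism $\gamma$ whose real source points are Zariski dense---so Generic Smoothness yields an honest Euclidean open ball of $W$'s with real full-rank $(D_m,D_n)$. This is where the paper does the work your sketch defers.

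\textbf{Summary.} Your dimension count $\binom{m+n-k+1}{2}>\binom{m}{2}+\binom{n}{2}$ is correct and is an alternative way to see that the algebraic obstruction vanishes, but it does not by itself address positive semidefiniteness, and the semicontinuity you invoke goes the wrong way. The paper's Schur-complement parametrization turns positivity into ``real solutions of a linear system in squares'', which is then established by a dominance argument for a concrete map (Lemma~\ref{lem:mltrealsolutions}); that is the missing idea in your proposal.
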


\begin{proof}
Since every extreme ray of $\Sigma_{m,n}^\vee$ has rank at most $n$ by \cite[Theorem~2.1]{GrPiMR1057063}, every linear series that is in the kernel of a matrix $M\in\Sigma_{m,n}^\vee$ is in the kernel of a matrix $M'\in\Sigma_{m,n}^\vee$ of rank at most $n$.
We show that we can assume that $M$ is positive semidefinite of rank $n$ and the diagonal entries of $D_m$ and $D_n$ are nonzero.
If some of the diagonal entries of $D_m$ or $D_n$ are equal to $0$, then $M$ being positive semidefinite implies that the corresponding rows and columns of $M$ have to be $0$. In terms of the graph, this corresponds to the deletion of the vertices corresponding to these zero rows and columns. So this reduces to the case of $K_{m',n'}$, where $m'\leq m$ and $n'\leq n$. So we can assume that the diagonal entries of $D_m$ and $D_n$ are non-zero, which implies that the rank of $M$ is $n$.

With these assumptions, we conclude from the rank additivity of the Schur complement (\cite[Section~0.9]{ZhangMR2160825}) that $D_m - A D_n^{-1} A^t = 0$. A vector $(v,w)^t$, where $v\in \R^m$ and $w\in \R^n$, is in the kernel of $M$ if and only if $w = -D_n^{-1}A^t v$ by the bottom blocks of $M$. It is convenient to interpret this equation differently. Set $B = D_n^{-1/2} A^t D_m^{-1/2}$, where $D_m^{-1/2}$ and $D_n^{-1/2}$ denote the inverse of a square root of $D_m$ and $D_n$, respectively. Then $D_n^{1/2} w = - B D_m^{1/2} v$ and the above equation $D_m - AD_n^{-1}A^t = 0$ shows that $B$ is an isometric embedding of $\R^m$ into $\R^n$, i.e.~$B^tB = I_m$.

We show that for $k\leq m$, there is an open ball $B\subset\gr(k,m+n)$ in the Euclidean topology such that every $k$-dimensional subspace $W\in B$ with basis $(v_1,w_1)^t,\dots,(v_k,w_k)^t$, such that the matrix $V = (v_1,\dots,v_k)^t$ has rank $k$, is contained in the kernel of a matrix $M\in\Sigma_{m,n}^\vee$ whenever $\binomial{k+1}{2}<m+n$.
To prove this claim, we find an isometric embedding $B\colon \R^m \to \R^n$ and diagonal matrices $D_m$ and $D_n$ such that $D_n w_i = BD_m v_i$. Such an isometric embedding exists if and only if $\scp{D_m v_i,D_m v_j} = \scp{D_n w_i, D_n w_j}$ for all $i,j = 1,\dots,k$. 

The vectors $v_i$ and $w_j$ are given by the basis of $W\subset \R^{m+n}$. We read these equations as a homogeneous linear system whose variables are squares of the $m+n$ entries of the diagonal matrices $D_m$ and $D_n$. So we have $\binomial{k+1}{2}$ conditions in $m+n$ variables. There exists a nontrivial solution if $\binomial{k+1}{2} < m+n$. Therefore, $W$ is contained in the kernel of the matrix
\[
M = 
\begin{pmatrix}
D_m^2 & -D_m B^t D_n \\
-D_n B D_m & D_n^2
\end{pmatrix}
\]
by the above computation using the Schur complement. By the properties of the Schur complement, see \cite[Theorem~1.12]{ZhangMR2160825}, and the above argument for the existence of $B$, $M$ is positive semidefinite if $D_m$ and $D_n$ have real entries. The semialgebraic set of linear spaces in $\gr(k,m+n)$ such that $D_m$ and $D_n$ have real entries has non-empty interior, which we will prove in the following Lemma~\ref{lem:mltrealsolutions}.

This argument shows that for $k\leq m$, the semialgebraic set of $k$-dimensional subspaces of $\R^{m+n}$ in $\gr(k,m+n)$ that are contained in the kernel of positive semidefinite matrices $M$ of rank $n$ has non-empty interior whenever $\binomial{k+1}{2}<m+n$. In particular, if $n>\binomial{m}{2}$, we find an open ball in $\gr(m,m+n)$ of subspaces that are in the kernel of positive semidefinite matrices.
So by Proposition~\ref{prop:mltdefs}, the {\mlt} of $K_{m,n}$ is at least $\min\{k\colon \binomial{k+1}{2}\geq m+n\}$ for $n\leq \binomial{m}{2}$ and at least $m+1$ for $n>\binomial{m}{2}$. 
\end{proof}

\begin{lem}\label{lem:mltrealsolutions}
Let $k\leq m\leq n$ be integers and assume $\binom{k+1}{2}<m+n$. Then there exists an open ball $B\subset \gr(k,m+n)(\R)$ in the Euclidean topology consisting of real subspaces with basis $(v_1,w_1), (v_2,w_2), \ldots, (v_k,w_k)$, $v_i\in \R^m$ and $w_i\in\R^n$, such that the equations $\scp{D_mv_i,D_mv_j} = \scp{D_nw_i,D_nw_j}$ for all $1\leq i\leq j\leq k$, where $D_m$ and $D_n$ are diagonal matrices, have a real solution $D_m$ and $D_n$ of full rank.
\end{lem}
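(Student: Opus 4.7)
The idea is to linearize the equations $\scp{D_m v_i, D_m v_j} = \scp{D_n w_i, D_n w_j}$ in the squares of the diagonal entries and apply the implicit function theorem at a well-chosen base point. Setting $\alpha_\ell = (D_m)_{\ell\ell}^2$, $\beta_\ell = (D_n)_{\ell\ell}^2$, and $\gamma = (\alpha, \beta) \in \R^{m+n}$, the $\binom{k+1}{2}$ conditions become a homogeneous linear system $L(W) \gamma = 0$, where $L(W) \in \R^{\binom{k+1}{2} \times (m+n)}$ has rows indexed by pairs $1 \leq i \leq j \leq k$ and entries that are quadratic polynomials in the affine coordinates of $W \in \gr(k, m+n)$. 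A real diagonal solution of full rank exists precisely when $\ker L(W) \cap \R^{m+n}_{>0} \neq \emptyset$.

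The strategy is to produce a base point $W_0 \in \gr(k, m+n)(\R)$ and $\gamma_0 \in \R^{m+n}_{>0}$ such that $L(W_0)\gamma_0 = 0$ and $L(W_0)$ has maximal row rank $\binom{k+1}{2}$. With these two hypotheses, the smooth map $(W, \gamma) \mapsto L(W)\gamma$ has surjective $\gamma$-derivative at $(W_0, \gamma_0)$, so the implicit function theorem yields a Euclidean-open ball $B \subset \gr(k, m+n)(\R)$ around $W_0$ and a continuous map $\gamma \colon B \to \R^{m+n}$ with $\gamma(W_0) = \gamma_0$ and $L(W)\gamma(W) = 0$. Shrinking $B$ if necessary keeps $\gamma(W) > 0$ throughout, and taking entrywise real square roots then provides the required diagonal matrices $D_m$ and $D_n$ of full rank.

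To construct such a $W_0$ with $\gamma_0 = \mathbf{1}_{m+n}$, first choose any real $k \times m$ matrix $V$ of rank $k$, set $G = VV^t \in \pd[k]$, and then pick any real $k \times n$ matrix $Z$ with $ZZ^t = G$ (this is possible since $k \leq n$ and $G$ is positive definite, e.g.\ $Z = (G^{1/2}\mid 0)$, possibly after right multiplication by an orthogonal matrix). Let $W_0$ be the row span of $(V \mid Z) \in \R^{k \times (m+n)}$. The identity $VV^t = ZZ^t$ is exactly the statement that $\gamma_0 = \mathbf{1}_{m+n}$ lies in $\ker L(W_0)$.

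The main obstacle is to ensure that $L(W_0)$ simultaneously has full row rank. A row relation $\sum_{i \leq j} c_{ij}(v_i \odot v_j, -w_i \odot w_j) = 0$ encodes a nonzero symmetric matrix $C \in \sym[k]$ (with $C_{ii} = c_{ii}$ and $C_{ij} = C_{ji} = c_{ij}/2$ for $i < j$) whose associated quadric in $\P^{k-1}$ contains every column of $V$ and every column of $Z$. Hence $L(W_0)$ has full rank if and only if the $m + n$ column vectors $V_{\cdot 1}, \dots, V_{\cdot m}, Z_{\cdot 1}, \dots, Z_{\cdot n} \in \R^k$ do not lie on a common quadric in $\P^{k-1}$. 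Since the space of quadrics is $\binom{k+1}{2}$-dimensional and we have $m + n > \binom{k+1}{2}$ points, this is the generic expectation; I would verify it by a dimension count on the incidence variety of triples $(V, Z, [C])$ with $ZZ^t = VV^t$ and $[C] \in \P(\sym[k])$ vanishing on all columns of $V$ and $Z$, showing that the locus of bad pairs $(V, Z)$ has codimension at least $m + n - (\binom{k+1}{2}-1) \geq 1$ in the real algebraic variety $\{(V, Z) : ZZ^t = VV^t\}$. A generic choice of $(V, Z)$ therefore produces a $W_0$ satisfying both requirements, and the implicit function theorem argument above completes the proof.
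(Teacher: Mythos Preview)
Your strategy is sound and genuinely different from the paper's. Both proofs begin by linearizing in the squares of the diagonal entries, but from there they diverge: the paper argues that for generic $W$ the kernel of the coefficient matrix contains a vector with all entries nonzero (via a symmetry argument on the columns), reformulates this as dominance of a map from $(\C^\ast)^m\times(\C^\ast)^n\times Y$ to the Grassmannian, and then invokes Generic Smoothness together with Zariski density of real points to pass to the real statement. Your route through the implicit function theorem is more elementary and more explicit --- you get a concrete open ball around a constructed base point rather than an abstract full-dimensional semialgebraic set --- but the price is that you must actually produce a $(V,Z)$ with $VV^t=ZZ^t$ for which $L(W_0)$ has full row rank.

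That last step is where your argument is incomplete, and there is a subtlety you should be aware of before carrying out the promised dimension count. On the constraint variety $X=\{(V,Z):VV^t=ZZ^t\}$ the $m+n$ quadric-vanishing conditions are \emph{never} independent: contracting the identity $\sum_\ell V_{\cdot\ell}V_{\cdot\ell}^t=\sum_{\ell'}Z_{\cdot\ell'}Z_{\cdot\ell'}^t$ against any $C\in\sym[k]$ gives the automatic relation $\sum_\ell V_{\cdot\ell}^tCV_{\cdot\ell}=\sum_{\ell'}Z_{\cdot\ell'}^tCZ_{\cdot\ell'}$. So the naive codimension estimate $m+n-(\binom{k+1}{2}-1)$ is off by one; the expected codimension of the bad locus is $m+n-\binom{k+1}{2}$, which is still $\geq 1$ under your hypothesis, so the conclusion survives. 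But to turn the expected codimension into an actual upper bound on $\dim I$ you still need to check that the remaining $m+n-1$ conditions cut transversally for at least one $[C]$, or equivalently that the map $(V,Z)\mapsto VV^t-ZZ^t$ from $Q_C^m\times Q_C^n$ to the hyperplane $\{S:\tr(CS)=0\}$ is dominant. This is true but not automatic. A cleaner way to close the gap is simply to exhibit one explicit base point: for instance, when $k=m=n=2$ take $V=I_2$ and $Z$ a rotation by a generic angle, and build the general case by an analogous direct construction or induction.
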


\begin{proof}
As in the proof of the preceding Lemma~\ref{lem:mltlowerbound}, we think of the equations $\scp{D_m v_i,D_mv_j} - \scp{D_nw_i,D_nw_j} = 0$ as linear equations in the squares of the diagonal entries of $D_m$ and $D_n$. Let $C$ be the coefficient matrix of this linear system. Its rows are indexed by pairs $(i,j)$ with $1\leq i\leq j\leq k$ specifying the equation and its columns are indexed by $a = 1,\dots,m$, the components of the vectors $v_i$ and then $b = 1,\dots,n$, the components of the vectors $w_i$. The $(i,j),a$ entry in the left $\binom{k+1}{2}\times m$ block is given by $v_{i,a}v_{j,a}$ and the $(i,j),b$ entry in the right $\binom{k+1}{2}\times n$ block is given by $-w_{i,b}w_{j,b}$. 

Generically, the kernel of $C$ will contain a vector with only non-zero entries. Indeed, if every vector in the kernel of $C$ had a zero entry, there would be a position such that all vectors in the kernel have a zero there. For the matrix $C$, that means that the corresponding column is not contained in the span of the other columns. By symmetry of the columns of $C$, there would then be a non-empty open set of vectors $(v_1,w_1),\ldots,(v_k,w_k)$ such that the same holds for each column. Since $(\A^{m+n})^k$ is irreducible, the intersection of these non-empty open sets  would be non-empty. This would mean that $C$ has rank $m+n$, which is absurd because $\binom{k+1}{2}<m+n$.

In other words, the following map is dominant
\[
\gamma\colon \left\{
\begin{array}[]{cc}
(\C^\ast)^m\times (\C^\ast)^n\times Y \to \gr(k,m+n)\\
(D_m,D_n,(v_1,\ldots,v_k,w_1,\ldots,w_k))\mapsto 
\begin{pmatrix}
\left(D_m^{-1}v_1\right)^t & \left(D_n^{-1}w_1\right)^t\\
\vdots & \vdots \\
\left(D_m^{-1}v_k\right)^t & \left(D_n^{-1}w_k\right)^t
\end{pmatrix}
\end{array}\right.,
\]
where $Y$ is the variety of vectors $(v_1,\ldots,v_k,w_1,\ldots,w_k)$ in $(\A^m)^k\times (\A^n)^k$ such that $\scp{v_i,v_j} = \scp{w_i,w_j}$ and $v_1,\ldots,v_k$ are linearly independent. For every irreducible component of $Y$, its real points are Zariski-dense. So, by Generic Smoothness \cite[Corollary III.10.7 and Proposition III.10.4]{HarMR0463157}, the image of this map restricted to real matrices $D_m$ and $D_n$ and real vectors $(v_1,\ldots,v_k,w_1,\ldots,w_k)$ has full dimension in the whole image of $\gamma$ as a semi-algebraic set. This is a reformulation of the claim.
\end{proof}

\begin{lem}\label{lem:mltupperbound}
Let $n\geq m\geq 2$ and set $M$ to be the smallest $k$ such that $\binomial{k+1}{2}\geq m+n$. The {\mlt} of the complete bipartite graph $K_{m,n}$ is at most $\min\{M,m+1\}$.
\end{lem}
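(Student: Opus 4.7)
The plan is to apply Proposition~\ref{prop:mltdefs}(2): it suffices to exhibit a dense Zariski-open subset of $\gr(k,m+n)$, with $k=\min\{M,m+1\}$, none of whose elements lies in the kernel of any nonzero positive semidefinite matrix in $L_{K_{m,n}}$. The bound $\mlt(K_{m,n})\leq m+1$ is immediate from Theorem~\ref{thm:rankbipartite}, since $\mlt(K_{m,n})\leq \gcr(K_{m,n})\leq m+1$, so the substance lies in proving $\mlt(K_{m,n})\leq M$ when $M\leq m$ (equivalently, $n\leq \binom{m}{2}$); I fix $k=M$ from now on.

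Assume for contradiction that some nonzero positive semidefinite $N\in L_{K_{m,n}}$ has $W\subseteq \ker(N)$. Let $S\subseteq [m+n]$ be the support of the diagonal of $N$. Since $N$ is PSD, the rows and columns outside $S$ vanish, so the principal submatrix $N'$ on $S$ is a nonzero PSD matrix in $L_{K_{m',n'}}$ with all-positive diagonal (writing $m'=|S\cap[m]|$, $n'=|S\cap[n]|$), and $\pi_S(W)\subseteq \ker(N')$ for the coordinate projection $\pi_S$. For generic $W$, $\dim \pi_S(W)=\min\{k,m'+n'\}$, so in the subcase $|S|<k$ one has $\pi_S(W)=\R^{m'+n'}$ and hence $N'=0$, a contradiction. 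In the main subcase $|S|\geq k$, running the analysis of Lemma~\ref{lem:mltlowerbound} in reverse for $N'=\begin{pmatrix} D_{m'} & A' \\ A'^t & D_{n'}\end{pmatrix}$ on a basis $(v_i,w_i)$ of $\pi_S(W)\subseteq \R^{m'}\oplus \R^{n'}$, the manipulation $v_i^t D_{m'} v_j = -v_i^t A' w_j = (D_{n'}w_i)^t w_j$ produces the $\binom{k+1}{2}$ linear identities
\[
v_i^t D_{m'} v_j = w_i^t D_{n'} w_j,\quad 1\leq i\leq j\leq k,
\]
which form a linear system in the $m'+n'$ strictly positive diagonal entries of $D_{m'}$ and $D_{n'}$.

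Indexing $\R^{\binom{k+1}{2}}$ by pairs $(i,j)$ with $i\leq j$ and identifying the resulting space with $\sym[k]$, the column of the coefficient matrix corresponding to $a\in [m']$ (resp.\ $b\in [n']$) is the rank-one symmetric matrix $\alpha_a\alpha_a^t$ (resp.\ $-\beta_b\beta_b^t$), where $\alpha_a=(v_{1,a},\ldots,v_{k,a})^t$ and $\beta_b=(w_{1,b},\ldots,w_{k,b})^t$. Since $\dim \sym[k]=\binom{k+1}{2}\geq m+n\geq m'+n'$ by the choice of $M$, the system has only the trivial (hence non-positive) solution precisely when these $m'+n'$ rank-one symmetric matrices are linearly independent in $\sym[k]$. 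I verify this in generic-position form: in an affine chart of the Grassmannian (Remark~\ref{rem:grassmannian}) one arranges that $k$ of the $\alpha_a,\beta_b$ are the standard basis of $\R^k$, contributing the $k$ diagonal matrices $e_\ell e_\ell^t$ which span the diagonal subspace of $\sym[k]$, and a generic free choice of the remaining $m'+n'-k\leq \binom{k}{2}$ vectors in $\R^k$ makes their rank-one products project to linearly independent vectors in the $\binom{k}{2}$-dimensional quotient of $\sym[k]$ modulo its diagonal subspace. Pulling back through the finitely many rational maps $\gr(k,m+n)\dashrightarrow \gr(k,m'+n')$ induced by the $\pi_S$ and intersecting the resulting dense Zariski-open sets yields a dense Zariski-open $U\subseteq \gr(k,m+n)$ on which no nonzero PSD witness $N$ exists, giving $\mlt(K_{m,n})\leq M$ via Proposition~\ref{prop:mltdefs}(2). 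The main obstacle will be this generic linear-independence statement for the rank-one symmetric matrices $\alpha_a\alpha_a^t,\beta_b\beta_b^t\in \sym[k]$; once secured, the remaining reduction to positive-diagonal blocks is routine bookkeeping.
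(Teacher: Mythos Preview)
Your proof is correct and takes a genuinely different route from the paper's. The paper argues by a dimension count: it parameterizes the set of $k$-dimensional subspaces that arise as kernels of positive semidefinite matrices in $L_{K_{m,n}}$ (with all-positive diagonal) via a map
\[
\phi\colon \R^n\times O\times \R^m\times \gr(k,m)\to\gr(k,m+n),\qquad (D_n,B,D_m,V)\mapsto (V\mid V D_m B^t D_n^{-1}),
\]
where $O=\{B\in M_{n\times m}:B^tB=I_m\}$, and verifies that $\dim(\im\phi)<\dim\gr(k,m+n)$ as soon as $\binom{k+1}{2}\geq m+n$. Your argument instead fixes a putative positive semidefinite witness $N$, strips off zero diagonal rows/columns to reduce to a full-support principal block $N'$, and extracts from the kernel condition the homogeneous linear system $v_i^tD_{m'}v_j=w_i^tD_{n'}w_j$ in the diagonal entries; you then observe that the columns of the coefficient matrix are the rank-one symmetric matrices $\alpha_a\alpha_a^t,\beta_b\beta_b^t\in\sym[k]$ and show these are linearly independent for generic $W$, forcing $D_{m'}=D_{n'}=0$ and hence $N'=0$.

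What each buys: your approach is more elementary, avoiding the bookkeeping of Stiefel-type dimensions and the stabilizer count in the paper's proof, and it gives an explicit Zariski-open description of the good locus (nonvanishing of finitely many minors). It also handles the degenerate case of witnesses with some zero diagonal entries cleanly via the support reduction over all $S$, a reduction the paper's upper-bound argument leaves implicit. The paper's parameterization, on the other hand, dovetails directly with the construction in the matching lower bound (Lemma~\ref{lem:mltlowerbound}): the same map $\phi$ appears in both, so one sees at a glance that the dimension count is tight. Your key step---that at most $\binom{k}{2}$ generic vectors $y\in\R^k$ have independent off-diagonal Veronese images $(y_iy_{i'})_{i<i'}$---is correct since these images span $\R^{\binom{k}{2}}$ (take $y=e_i+e_{i'}$), and the bound $m'+n'-k\leq m+n-k\leq\binom{k+1}{2}-k=\binom{k}{2}$ is exactly the hypothesis $k\geq M$.
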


\begin{proof}
Since the {\grank} of $K_{m,n}$ is $m+1$ in the case $n>\binomial{m}{2}$ by Theorem~\ref{thm:rankbipartite}, the {\mlt} of $K_{m,n}$ is at most $m+1$ in this case.
So from now on, we assume $n\leq \binomial{m}{2}$. In this case, the integer $M$ is at most $m$. We estimate the dimension of the image of the map
\[
\phi\colon\left\{
\begin{array}[]{ccc}
\R^n \times O \times \R^m \times \gr(k,m) & \to & \gr(k,m+n) \\
(D_n,B,D_m,V) & \mapsto & \left(V | VD_mB^tD_n^{-1} \right)
\end{array},
\right.
\]
where $O$ is the set of $n\times m$ matrices such that $B^t B = I_m$ and $M\leq k \leq  m$. Again, the computation using the Schur complements at the beginning of the proof of Lemma~\ref{lem:mltlowerbound} shows that the linear spaces in the image of this map are the kernels of matrices of the form
\[
M =
\begin{pmatrix}
D_m^2 & -D_m B^t D_n \\
-D_n B D_m & D_n^2
\end{pmatrix}.
\]
The dimension of the domain is $n + \sum_{i=1}^m(n-i) + m + k(m-k)$ and the dimension of the image is at most $n+\sum_{i=1}^k(n-i) + m + k(m-k)-1$ because $B$ is only determined on a $k$-dimensional subspace of $\R^m$ and there is a $1$-dimensional stabilizer. Indeed, given $\alpha\in\R^\ast$, we have $\phi(\diag_n(\alpha)D_n,B,D_m\diag_m(\alpha),V) = \phi(D_n,B,D_m,V)$. We compare it to the dimension of $\gr(k,m+n)$:
\[
k(m+n-k) - \left( n+\sum_{i=1}^k(n-i) + m + k(m-k) - 1 \right) = -m-n + \binomial{k+1}{2} + 1.
\]
So, if $k\leq m$ and $\binomial{k+1}{2} > m+n-1$, then the image of $\phi$ cannot have non-empty interior in $\gr(k,m+n)$. In other words, the {\mlt} of $K_{m,n}$ is at most the smallest $k$ such that $\binomial{k+1}{2} \geq m+n$ by Proposition~\ref{prop:mltdefs}(2). 
\end{proof}

\begin{proof}[Proof of Theorem~\ref{thm:bipartite}]
The lower bound in Lemma~\ref{lem:mltlowerbound} matches the upper bound in Lemma~\ref{lem:mltupperbound} on the {\mlt} of $K_{m,n}$. So we conclude that the {\mlt} of $K_{m,n}$ is the smallest $k$ such that $\binomial{k+1}{2} \geq m+n$ if $n\leq \binomial{m}{2}$, and $m+1$ if $n>\binomial{m}{2}$. 
\end{proof}

\begin{Cor}\label{cor:gcrvsmlt}
\begin{compactenum}[(a)]
\item The {\mlt} of $K_{5,5}$ is $4$, whereas the {\grank} is $5$.
\item Asymptotically, the {\mlt} of $K_{m,m}$ is of the order $2\sqrt{m}$. 
\item The {\mlt} of $K_{m,n}$ is equal to the {\grank} of $K_{m,n}$ for $m=2,3,4$ and whenever $n > \binom{m}{2}$.  
\end{compactenum}
\end{Cor}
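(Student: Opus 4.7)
The plan is to derive all three parts as direct arithmetic consequences of Theorem~\ref{thm:rankbipartite} and Theorem~\ref{thm:bipartite}. These two theorems give closed-form expressions for both invariants: the {\grank} of $K_{m,n}$ equals $m$ when $n\leq\binom{m}{2}$ and $m+1$ when $n>\binom{m}{2}$, while the {\mlt} of $K_{m,n}$ equals $\min\{M,m+1\}$, where $M$ is the smallest integer $k$ with $\binom{k+1}{2}\geq m+n$. With these formulas in hand, each part reduces to a short computation.

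For part (a), I substitute $m=n=5$. Since $5\leq \binom{5}{2}=10$, the {\grank} equals $5$. For the {\mlt}, the smallest $k$ with $\binom{k+1}{2}\geq 10$ is $k=4$ (as $\binom{5}{2}=10$), so the {\mlt} is $\min\{4,6\}=4$. For part (b), I use that $m\leq \binom{m}{2}$ for every $m\geq 3$, so $n=m$ lies in the regime where the {\mlt} equals $M$. Solving $k(k+1)\geq 4m$ for the smallest positive integer $k$ gives $M=\lceil(-1+\sqrt{1+16m})/2\rceil$, which is asymptotically $2\sqrt{m}$.

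For part (c), the regime $n>\binom{m}{2}$ is handled uniformly: from $m+n>m+\binom{m}{2}=\binom{m+1}{2}$, the smallest $k$ with $\binom{k+1}{2}\geq m+n$ satisfies $k\geq m+1$, so $\min\{M,m+1\}=m+1$, matching the {\grank}. For the small values $m=2,3,4$, the complementary regime $n\leq\binom{m}{2}$ contains only finitely many cases, which I verify directly. For $m=2$ the regime is empty (since $n\geq 2>1=\binom{2}{2}$); for $m=3$ the only case is $n=3$, where $\binom{4}{2}=6\geq 6$ gives $M=3$; for $m=4$ the cases $n\in\{4,5,6\}$ require $\binom{k+1}{2}\geq 8,9,10$, each satisfied first at $k=4=m$.

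There is no real obstacle here beyond careful bookkeeping: the content is entirely contained in Theorems~\ref{thm:rankbipartite} and \ref{thm:bipartite}, and the corollary is a matter of comparing two explicit formulas and evaluating them on a finite list of cases. The only mildly subtle point is the uniform inequality $M\geq m+1$ whenever $n>\binom{m}{2}$, which is what makes the {\mlt} and {\grank} coincide in that entire range.
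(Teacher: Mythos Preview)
Your proof is correct and takes essentially the same approach as the paper: both derive all three parts as direct arithmetic consequences of Theorems~\ref{thm:rankbipartite} and \ref{thm:bipartite}, checking the finitely many cases for $m=2,3,4$ by hand. Your write-up is in fact slightly cleaner in one respect: you give a uniform argument that $M\geq m+1$ whenever $n>\binom{m}{2}$ (via $m+n>\binom{m+1}{2}$), whereas the paper absorbs this into the case-by-case discussion for small $m$.
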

The case $m=2$ in (c) was known before, see \cite[Proposition~4.2]{UhlerMR3014306}

\begin{proof}
(a) follows directly from Theorem~\ref{thm:bipartite}. (b) also follows from Theorem~\ref{thm:bipartite} because $\binomial{k+1}{2}\geq 2m$ is quadratic in $k$ and the constant coefficient, after rescaling to make the inequality monic in $k$, is $4m$. To prove (c), we discuss the several cases. For $m=2$, we are always in the case $n>\binomial{m}{2}$ because $n\geq m$, so the {\grank} of $K_{2,n}$ is $3$ and the {\mlt} is $3$. For $m=3$, we have two cases. If $n = 3 = \binomial{3}{2}$, the {\grank} is $3$ as well as the {\mlt}. If $n>3$, the {\grank} and {\mlt} are $4$. For $m=4$, the {\grank} and the {\mlt} are $4$ for $n = 4, 5, 6$ and both are $5$ for $n>6$.
\end{proof}

\begin{Rem}
It is essential for the dimension count in the proof of Theorem~\ref{thm:bipartite} that we are considering positive semidefinite matrices $M\in L_{K_{m,n}}$. For any linear series $W\subset R_1$ of dimension $m-1$, we can find a matrix $M$ of rank $2$ such that $W$ is contained in the kernel of $M$. Namely, choose a linear form $\scp{v,-}$ on $\R^m$ that contains the projection of $W$ onto the first $m$ components and a linear form $\scp{w,-}$ on $\R^n$ that contains the projection of $W$ onto the last $n$ components. Then the matrix
\[
M = 
\begin{pmatrix}
0 & v^tw \\
w^tv & 0
\end{pmatrix}
\]
has rank $2$ and contains $W$ in its kernel. Since the diagonal is $0$, it cannot be positive semidefinite.
\end{Rem}

\begin{Rem}
The {\mlt} of the complete bipartite graph $K_{m,m}$ is roughly $2\sqrt{m}$ and its {\grank} is $m$. By dropping edges, we can decrease the {\grank}. Using Theorem~\ref{thm:dimcount}, we know that a subgraph of $K_{m,m}$ of rank roughly $2\sqrt{m}$ can have at most about $4m\sqrt{m}$ edges. So in order for the {\mlt} and the {\grank} of $G$ to have a chance to be equal, $G$ needs to be relatively sparse.
\end{Rem}

\subsection{Non-symmetric Completion}\label{sec:non-sym}
We now relate the general low rank matrix completion problem over $\C$ to the matrix completion problem in the symmetric setup for the special case of bipartite graphs. 

Let $A$ be a partially specified $m\times n$ matrix. To $A$, we associate a bipartite graph $G$ on the disjoint union $[m]\cup [n]$, where the two parts index the rows and columns, respectively. We draw the edge $\{i,j\}$ in $G$ if and only if the $(i,j)$th entry of $A$ is specified. The \emph{non-symmetric {\grank}} of $G$ is the smallest $r$ such that a generic partially specified $m\times n$ matrix $A$ with the pattern of known entries given by $G$ has a completion of rank $r$. We want to relate the non-symmetric {\grank} to the {\grank} of $G$, i.e.~the {\grank} in the symmetric setup considered before.
\begin{Prop}\label{prop:matrixcompletion}
Let $G$ be a bipartite graph.
\begin{compactenum}[(a)]
\item The non-symmetric {\grank} of $G$ is at most the symmetric {\grank} of $G$.
\item The symmetric {\grank} of a bipartite graph $G$ is at most the non-symmetric {\grank} of $G$ plus one.
\end{compactenum}
\end{Prop}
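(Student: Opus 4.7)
For (a), observe that if $M\in\sym[m+n]$ has rank at most $r$, then its off-diagonal $m\times n$ block $B$ is a submatrix of $M$ and so satisfies $\rk(B)\leq r$. The specified entries of the non-symmetric partial matrix are exactly the entries of $B$ on edges of the bipartite graph, so dominance of $\pi_G$ on $V_r$ in the symmetric setup descends to dominance of the projection from $m\times n$ matrices of rank at most $r$ onto $\C^{|E|}$. This proves the non-symmetric {\grank} is at most $\gcr(G)$.

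For (b) I would use the equivalent characterization in Proposition~\ref{prop:rankdefs}(1): letting $r$ denote the non-symmetric {\grank} of $G$, it suffices to produce a linear series $W\subset R_1$ of dimension $r+1$ with $\ideal{W}_2=R_2$. The bipartition of $G$ gives $R_1=R_1^{\mathrm{top}}\oplus R_1^{\mathrm{bot}}$ with $R_1^{\mathrm{top}}=\lspan(x_i)$ and $R_1^{\mathrm{bot}}=\lspan(y_j)$, and an orthogonal splitting $R_2=R_2^{\mathrm{top}}\oplus R_2^{\mathrm{mix}}\oplus R_2^{\mathrm{bot}}$ with $R_2^{\mathrm{top}}=\lspan(x_i^2)$, $R_2^{\mathrm{bot}}=\lspan(y_j^2)$, and $R_2^{\mathrm{mix}}=\lspan(x_iy_j\colon\{i,j\}\in E)$. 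I would take a generic $W$ of dimension $r+1$, pick a basis $w_i=u_i+v_i$ with $u_i\in R_1^{\mathrm{top}}$ and $v_i\in R_1^{\mathrm{bot}}$, and check that $\ideal{W}_2=\sum_iw_iR_1$ contains each of the three summands of $R_2$.

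The two pure summands are immediate: since $x_kx_l=0$ in $R_2$ for $k\neq l$, the product $u_1\cdot x_k$ equals $(u_1)_kx_k^2$ in $R_2$, and for a generic $u_1$ every coefficient $(u_1)_k$ is nonzero, so already $u_1R_1^{\mathrm{top}}=R_2^{\mathrm{top}}$; symmetrically $v_1R_1^{\mathrm{bot}}=R_2^{\mathrm{bot}}$. The mixed summand is the heart of the argument: the contribution of $\ideal{W}_2$ to $R_2^{\mathrm{mix}}$ is $\sum_{i=1}^{r+1}(v_iR_1^{\mathrm{top}}+u_iR_1^{\mathrm{bot}})$, and I would show this equals $R_2^{\mathrm{mix}}$ using the Terracini reformulation of the hypothesis on the non-symmetric {\grank}: dominance of the parameterization $(U,V)\mapsto (UV^t)|_E$ from $\C^{m\times r}\times\C^{n\times r}$ to $\C^{|E|}$ is equivalent, by generic smoothness, to surjectivity of its differential, which computes to $\sum_{k=1}^r(v_kR_1^{\mathrm{top}}+u_kR_1^{\mathrm{bot}})=R_2^{\mathrm{mix}}$ for a generic $r$-tuple $(u_k,v_k)$. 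The main technical obstacle is the genericity check that the first $r$ basis vectors of a generic $(r+1)$-dimensional $W$ form such an $r$-tuple; this follows because the natural map sending $(W,\text{basis})\mapsto (w_1,\ldots,w_r)\in R_1^r$ is dominant, so the Zariski-open locus of Terracini-good $r$-tuples is hit. Combining the three inclusions gives $\ideal{W}_2=R_2$, and hence $\gcr(G)\leq r+1$ by Proposition~\ref{prop:rankdefs}(1).
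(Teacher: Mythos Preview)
Your argument for (a) is correct and essentially the same as the paper's.

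For (b), your approach via Proposition~\ref{prop:rankdefs}(1) and the Terracini description of the non-symmetric differential is genuinely different from the paper's: the paper instead observes that a rank-$r$ non-symmetric completion $A=UV^t$ yields a symmetric rank-$r$ completion $\begin{pmatrix}U\\V\end{pmatrix}\begin{pmatrix}U^t & V^t\end{pmatrix}$ of the off-diagonal block, then uses a fiber-dimension count (together with the inequality $r(m+n)-r^2\geq |E|$ and projective dimension theory) to show that the $m+n$ additional diagonal constraints can generically be met at rank $r+1$.

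However, your argument has a genuine gap at the final step. What you actually establish is that the \emph{projections} of $\ideal{W}_2$ onto each of $R_2^{\mathrm{top}}$, $R_2^{\mathrm{mix}}$, $R_2^{\mathrm{bot}}$ are surjective; you then write ``combining the three inclusions gives $\ideal{W}_2=R_2$''. But surjectivity of all three projections does not imply equality: a proper subspace of a direct sum can project onto every summand. Concretely, the assertion ``$u_1R_1^{\mathrm{top}}=R_2^{\mathrm{top}}$'' is true, but $u_1\notin W$, so this does not place $R_2^{\mathrm{top}}$ inside $\ideal{W}_2$; what lies in $\ideal{W}_2$ is $w_1x_k=(u_1)_kx_k^2+v_1x_k$, which has an unwanted mixed component. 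Likewise, the identity $\sum_{i}(v_iR_1^{\mathrm{top}}+u_iR_1^{\mathrm{bot}})=R_2^{\mathrm{mix}}$ is only the image of $\ideal{W}_2$ in $R_2^{\mathrm{mix}}$, not its intersection with $R_2^{\mathrm{mix}}$. To close the argument along your lines you would need to show, for instance dually via Proposition~\ref{prop:rankdefs}(2), that no nonzero $M=\begin{pmatrix}D_m & B\\ B^t & D_n\end{pmatrix}\in L_G$ annihilates a generic $(r{+}1)$-dimensional $W$: the equations $D_mu_i+Bv_i=0$, $B^tu_i+D_nv_i=0$ couple the diagonal blocks to $B$, and eliminating $D_m,D_n$ leaves conditions on $B$ that are not of the Terracini form $Bv_i=0$, $B^tu_i=0$. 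Bridging this coupling is exactly where the extra $(r{+}1)$-st vector must do real work, and your write-up does not supply that step.
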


\begin{proof}
Given a $G$-partial $m\times n$ matrix $A$, we consider the $G$-partial symmetric block matrix
\[
M' = \begin{pmatrix}
D_m & A \\
A^t & D_n
\end{pmatrix},
\]
where $D_m$ and $D_n$ are matrices whose diagonal entries are specified. To prove (a), let $M$ be a completion of rank $r$ of the $G$-partial symmetric matrix $M'$. Then there are $(m+n)\times r$ matrices $U$ and $V$ such that $M = U V^t$. Since $A$ is a block of $M$, such a factorization of $M$ gives a factorization $A = U_1 V_2^t$ of $A$, where $U_1$ and $V_2$ are the appropriate $m\times r$, resp.~$n\times r$, blocks of $U$, resp.~$V$, which shows (a).

To prove (b), define the projection map $\pi_G^n\colon M_{m\times n} \to \A^{\# E}$ from the set of $m\times n$ matrices to the space indexed by the edges of $G$ mapping a matrix $(a_{ij})$ to $(a_{ij}\colon \{i,j\}\in E)$. Let $r$ be the non-symmetric generic completion rank of $G$ so that $\pi_G^n$ restricted to the variety of matrices of rank at most $r$ is dominant. This implies the inequality $r (m+n) - r^2 - \# E\geq 0$ by dimension count. It also implies that the symmetric projection map $\pi_G\colon \sym\to \A^{\# E}$ that takes a symmetric matrix $(a_{ij})$ to $(a_{ij}\colon \{i,j\}\in E)$ restricted to the variety of symmetric matrices of rank at most $r$ is dominant. Indeed, if a generic $G$-partial $m\times n$ matrix $A$ has a completion $U V^t$ of rank $r$, then the symmetric $G$-partial matrix
\[
M = \begin{pmatrix}
\ast & A \\
A^t & \ast
\end{pmatrix}
\]
has the rank $r$ completion
\[
\begin{pmatrix}
U \\
V
\end{pmatrix}
\begin{pmatrix}
U^t & V^t
\end{pmatrix}.
\]
In particular, the generic fiber dimension of the morphism $\pi_G\vert_{V_{r+1}}\colon V_{r+1}\to \A^{\# E}$, where $V_{r+1}$ is the variety of symmetric matrices of rank at most $r+1$, is $(r+1)(m+n) - \binom{r+1}{2} - \# E$, because it is also dominant. Using the above inequality $r(m+n)-r^2 - \# E \geq 0$ together with $\binom{r+1}{2}\leq r^2$ for all $r\geq 1$, we get
\[
m+n + r(m+n) - \binom{r+1}{2} - \# E \geq m+n + (r(m+n) - r^2 - \# E) \geq m+n.
\]
Fixing the diagonal entries of a partial symmetric matrix in $\A^{\# E}$ imposes linear conditions on the fiber. Since a $G$-partial symmetric matrix consists of a partial matrix in $\A^{\# E}$ and $m+n$ diagonal entries, this dimension count suggests that we can complete a generic $G$-partial symmetric matrix to a symmetric matrix of rank $r+1$. Indeed, this is generically true, because the varieties in question are affine cones of quasi-projective varieties and so the claim follows by projective dimension theory.
\end{proof}

\end{document}